\newtheorem{thm}{Theorem}
\newtheorem{defn}{Definition}
\newtheorem{lem}{Lemma}
\newtheorem{prop}{Proposition}
\newtheorem{rem}{Remark}
\numberwithin{equation}{section} \setcounter{tocdepth}{1}
\begin{document}

\title{Dynamics of a population with two equal dominated species}

\author[U.\,A.\,Rozikov, J.\,B. \,Usmonov]{U.~A.~Rozikov, J.~B.~Usmonov}

 \address{U.\,A.\,Rozikov\\ Institute of Mathematics,
81, Mirzo Ulug'bek str., 100170, Tashkent, Uzbekistan.}
\email{rozikovu@yandex.ru}

 \address{J.\,B.\,Usmonov\\ Institute of Mathematics,
81, Mirzo Ulug'bek str., 100170, Tashkent, Uzbekistan.}
\email{javohir0107@gmail.com}

\begin{abstract} We consider a population with two equal dominated species, dynamics of which
is defined by an one-dimensional piecewise-continuous, two parametric function.
It is shown that for any non-zero parameters this function has two fixed points and several periodic points.
We prove that all periodic (in particular fixed) points are repelling, and find an invariant set
which asymptotically involves the trajectories of any initial point except fixed and periodic ones.
We showed that the orbits are unstable and chaotic because Lyapunov exponent is non-negative.
The limit sets analyzed by bifurcation diagrams. We give biological interpretations of our results.

\end{abstract}

\keywords{Piecewise-smooth, periodic point, Lyapunov exponent, bifurcation diagram}

\subjclass[2010]{37E05}

\maketitle

\section{Introduction}

Unlike Markov processes, where the probability distribution of the system evolves in a
linear fashion under the action of a stochastic operator, population dynamics
are nonlinear because the recombination of genes occurs due to their pairing. The mathematical
model of population dynamics was given in seminal works of Kesten \cite{Ke}, where he
studied dynamical systems generated by
  quadratic stochastic operator (QSO) of asexual (multi-type) populations and sex-linked systems.
He gave conditions on coefficients of QSO under which it has a unique fixed point. Moreover, several results
are given for the different mating rules and a stochastic theory for the Mendelian genetics model is given.
Kesten's papers form a valuable contribution to the mathematics of population processes. Future development
of the theory of QSO made by many authors (see for example \cite{G2}, \cite{G4}, \cite{GMR}, \cite{GGJ},
\cite{L}, \cite{M}, \cite{ME}, \cite{RS1}, \cite{RZ}, \cite{RSh}, \cite{RJ} and the references therein).

 In \cite{Ke} the random process behind the dynamical system of the type distribution
 (i.e. the random sizes of each type) is investigated;
 but, as he admitted himself, the results are somewhat disappointing as the model only
 yields a dichotomy between extinction and exponential growth, thus not demonstrating
 stability or adaptation (selection). In this paper we give a model of population
 where the dichotomy between extinction and exponential growth is \emph{not} the case.
 Our model is given by a function with one discontinuity point. We show that
 the dynamical system has a complexity despite the discontinuity point is unique.

 Note that many dynamical systems that happen naturally in the description of physical and biological
 processes are piecewise-smooth.  That's why the dynamical system containing terms that are non-smooth
 functions (as in our case) is studied as an important problem. Problems of like this appear, for example,
  in electrical circuits that have switches, mechanical devices in which components impact with each other or
  have free play, problems with friction, sliding or squealing, many control systems
   and models in the social and financial sciences where continuous change can trigger discrete actions \cite{ref2}.
See also motivating examples of a piecewise-smooth systems: generated by the floor function (\cite{ref1}, \cite{ref4}) and
 $p$-adic dynamical systems (see \cite{ARS}, \cite{LRS}, \cite{RS}, \cite{RSY}).

 Thus in this paper we consider one of such example,
 which arise in population biology as an evolution operator (piecewise-continuous Volterra QSO).
 
 The paper is organized as follows. In Section 2 we give necessary definitions and an evolution operator
 of the population with two dominating species. In Section 3 reducing the evolution operator to a
 function (with unique discontinuity point $1/2$) defined on $[0,1]$ we study its fixed and periodic points.
 Section 4 is devoted to Lyapunov exponents and bifurcations of the dynamical systems. The last section contains some
 biological interpretations of our results.

 \section{Definitions}

{\it Basic definitions}. Let us give some necessary definitions (see chapter 1 of \cite{Rb}).
 In order to define a discrete-time dynamical system consider a function $f:X\to X$.

For $x\in X$ denote
by $f^n(x)$ the $n$-fold composition of $f$ with itself (i.e. $n$ time iteration of $f$ to $x$):
$$f^n(x)=\underbrace{f(f(f\dots (f}_{n \,{\rm times}}(x)))\dots).$$

\begin{defn}\label{DS} For arbitrary given $x_0\in X$ and $f:X\to X$ the discrete-time
dynamical system (also called forward orbit or trajectory of $x_0$) is the sequence of points
\begin{equation}\label{eds}
x_0, x_1=f(x_0), x_2=f^2(x_0), x_3=f^3(x_0), \dots
\end{equation}
\end{defn}

\begin{defn}\label{fp}
A point $x\in X$ is called a \index{fixed point} fixed point for $f:X\to X$ if $f(x)=x$. The point $x$
is a periodic point of period $p$ if $f^p(x) = x$. The least positive $p$ for which
$f^p(x) = x$ is called the \index{prime period} prime period of $x$.
\end{defn}

Denote the set of all fixed
points by Fix$(f)$ and the set of all periodic
points of (not necessarily prime) period $p$ by ${\rm Per}_p(f)$.

It is clear that the set of all iterates of a periodic point form a periodic
sequence (orbit).

  There are three kinds of periodic points: attracting, repelling and indifferent.
  Let $x^\ast$ be a $p$-periodic point. If $|(f^p(x^\ast))'|<1$, $x^\ast$ - attracting; $|(f^p(x^\ast))'|>1$, $x^\ast$ - repelling; $|(f^p(x^\ast))'|=1$, $x^\ast$ - indifferent.

{\it Evolution operator.}
Let $S^{m-1}$ be the simplex:
$$S^{m-1}=\{x=(x_1, \dots, x_m)\in \mathbb R^m: x_i\geq 0, \ \sum_{i=1}^mx_i=1\}.$$

Consider a population consisting of $m$ species. Let $x^{0} =
(x_{1}^{0},\dots,x_{m}^{0})\in S^{m-1}$ be the probability distribution (where
$x^{0}_i=P(i)$ is the probability of $i,\, i=1,2,\dots,m$) of
species in the initial generation, and $P_{ij,k}$ the probability
that individuals in the $i$th and $j$th species interbreed to
produce an individual $k$, more precisely $P_{ij,k}$ is the
conditional probability $P(k|i,j)$ that $i$th and $j$th species
interbred successfully, then they produce an individual $k$.

Assume the ``parents" $ij$ are
independent i.e., $P(i,j)=P(i)P(j)=x^0_ix^0_j$. Then the probability
distribution $x'= (x_{1}',\dots,x_{m}')$ (the state) of the species
in the first generation can be found by the total probability
\begin{equation}\label{j1}
 x'_k= \mathop {\sum}
\limits^{m}_{i,j=1}P(k|i,j)P(i,j)=  \mathop {\sum}
\limits^{m}_{i,j=1}P_{ij,k}x^{0}_{i}x^{0}_{j} ,\,\,\,\,k= 1,\dots,m.
\end{equation}
 This means that the association $x^{0}\in S^{m-1}\rightarrow x'\in S^{m-1}$ (i.e. (\ref{j1}))
defines a map $V$ called the \textit{evolution operator}.

The states of the population described by the following discrete-time dynamical
 system
  \begin{equation}\label{2}
 x^{0},\ \ x'= V(x^0), \ \ x''=V^{2}(x^0),\ \  x'''= V^{3}(x^0),\dots
\end{equation}
where $V^n(x)=\underbrace{V(V(...V}_n(x))...)$
denotes the $n$ times iteration of $V$ to $x$.

{\bf The main problem} for a given dynamical system is to
describe the limit points of $\{x^{(n)}\}_{n=0}^\infty$ for
arbitrary given $x^{(0)}$.

The difficulty of the problem depends on the given QSO $V$,
see \cite{GMR}, \cite{L} and \cite{MG} for the results on this main problem.
Note that the known results are mainly shows that the corresponding to a QSO
population has behavior to make a dichotomy between extinction and exponential growth.
In this paper our aim is to consider a model of population which does not have such behavior.

{\it The model}.  Consider a population consisting of two species, i.e. $m=2$.
Denote the set of species by $E=\{1,2\}.$

For a parameter
$a\in [-1,1]$ define the operator $V_a: S^1\to S^1$ as
$$V_a : \left\{\begin{array}{ll}
x'=x(1+ay)\\[2mm]
y'=y(1-ax)
\end{array}\right.$$
For an initial point $z=(x,y)\in S^1$ the trajectory $z^{(n)}=(x^{(n)}, y^{(n)})=V^n(z)$ is given by
$$\left\{\begin{array}{ll}
x^{(n+1)}=x^{(n)}(1+ay^{(n)})\\[2mm]
y^{(n+1)}=y^{(n)}(1-ax^{(n)})
\end{array}\right.$$
From this system it is clear that the sequences $x^{(n)}$ and $y^{(n)}$
are monotone for any $a\ne 0$ (the case
$a=0$ gives a trivial dynamical system, therefore we will not consider it).
Since both sequences are bounded they have a limit, the limit points are fixed points of $V_a$. Therefore we get the following
\begin{equation}\label{j3}
\lim_{n\to \infty} z^{(n)}=\left\{\begin{array}{ll}
(0,1), \ \ \mbox{if} \ \ a<0\\[2mm]
(1,0), \ \ \mbox{if} \ \ a>0
\end{array}\right.
\end{equation}
Thus (\ref{j3}) means that if $a>0$ (resp. $a<0$) then the specie 2 (resp. 1) will extinct and
the specie 1 (resp. 2) will dominate (grow).

To ensure that both species will have equal domination we define an evolution operator,
$V_{a,b}:z=(x,y)\in S^1\to z'=(x',y')\in S^1$ by

\begin{equation}\label{vab}V_{a,b} = \left\{\begin{array}{ll}
V_a(z), \ \ \mbox{if} \ \ x\leq {1\over 2}\\[2mm]
V_b(z), \ \ \mbox{if} \ \ x> {1\over 2}
\end{array}\right.
\end{equation}

This is a piecewise-continuous operator, in the next sections we show that
the dynamical system generated by $V_{a,b}$ for some conditions on $a,b\in [-1,1]$
has property that both species will always survive.

We note that the probabilities $P_{ij,k}$ mentioned in (\ref{j1}) are
independent on $x\in S^{m-1}$, but for operator (\ref{vab}) we have
$$P_{11,1}=1-P_{11,2}=P_{22,2}=1-P_{22,1}=1$$
and the remaining coefficients
depend on the points $z=(x,y)$ of the simplex $S^1$:
\begin{equation}\label{vabp}P_{12,1}(z)=1-P_{12,2}(z)= \left\{\begin{array}{ll}
{1+a\over 2}, \ \ \mbox{if} \ \ x\leq {1\over 2}\\[2mm]
{1-b\over 2}, \ \ \mbox{if} \ \ x> {1\over 2}.
\end{array}\right.
\end{equation}

\section{A function with unique discontinuity point}

Consider the dynamical system generated by the evolution operator $V_{a,b}$. Using the equality $x+y=1$, this operator
can be reduced to the function $f_{a,b}:[0,1]\rightarrow[0,1]$ defined by
\begin{equation}\label{mfunc}
f_{a,b}(x)=\left\{
        \begin{array}{ll}
          x(1+a-ax), & \hbox{$0\leq x\leq \frac{1}{2}$;} \\
          x(1-b+bx), & \hbox{$\frac{1}{2}<x\leq 1$}
        \end{array}
      \right.
\end{equation}
where by the symmetry of parameters we can assume that $a,b\in[0,1]$.

It is clear that, the function is a piecewise-continuous, that's, it is discontinuous at the point $x=\frac{1}{2}$ when $(a,b)\ne(0,0)$ and, is smooth at each semi interval.

The notion of topological conjugacy is important in the study of iterated functions and more generally dynamical systems, since, if the dynamics of one iterated function can be solved, then those for any topologically conjugate function follow trivially. We give its definition.

\begin{defn}\label{def1}\cite{ref3} Let $f:A\rightarrow A$ and $g:B\rightarrow B$ be two maps. $f$ and $g$ are said to be topologically conjugate if there exists a homeomorphism $h:A\rightarrow B$ such that, $h\circ f=g\circ h$. The homeomorphism $h$ is called a topologically conjugacy.
\end{defn}

\begin{prop}\label{lem1}
$f_{a,b}$ and $f_{\tilde{a},\tilde{b}}$ are topologically conjugate if and only if $\tilde{a}=b$ and $\tilde{b}=a$.
\end{prop}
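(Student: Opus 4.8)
The plan is to prove both directions. For the "if" direction, observe that the symmetry $x \mapsto 1-x$ on $[0,1]$ should swap the two pieces of $f_{a,b}$: when $x \le 1/2$ we have $1-x \ge 1/2$, so one would compute $h \circ f_{a,b}$ with $h(x) = 1-x$ and check directly that it equals $f_{b,a} \circ h$ on each of the two subintervals. This is a short substitution: on $0 \le x \le 1/2$, $h(f_{a,b}(x)) = 1 - x(1+a-ax)$, and one checks this equals $f_{b,a}(1-x)$ using the formula for $f_{b,a}$ on the piece where its argument $1-x$ lies in $(1/2,1]$ (with the boundary point $x=1/2$ handled by noting $h(1/2)=1/2$ and checking the two one-sided limits match appropriately, or simply verifying the identity holds as stated). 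Since $h(x)=1-x$ is an involutive homeomorphism of $[0,1]$, this establishes that $f_{a,b}$ and $f_{b,a}$ are topologically conjugate, i.e. one may take $(\tilde a,\tilde b)=(b,a)$.

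For the "only if" direction, suppose $h:[0,1]\to[0,1]$ is a homeomorphism with $h\circ f_{a,b} = f_{\tilde a,\tilde b}\circ h$. The key is that topological conjugacy preserves all the dynamically meaningful structure, and I would pin down $h$ using the fixed points and the discontinuity. First, from the explicit formula, $f_{a,b}$ has fixed points exactly at $x=0$ and $x=1$ (on the first piece $x(1+a-ax)=x$ forces $x=0$ or $x=1$, but $x=1$ is not in $[0,1/2]$; on the second piece $x(1-b+bx)=x$ forces $x=0$ or $x=1$, and only $x=1$ is in $(1/2,1]$) — so $\mathrm{Fix}(f_{a,b})=\{0,1\}$, and likewise $\mathrm{Fix}(f_{\tilde a,\tilde b})=\{0,1\}$. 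A homeomorphism of $[0,1]$ is monotone, so either $h(0)=0,h(1)=1$ or $h(0)=1,h(1)=0$; the conjugacy relation forces $h$ to send fixed points to fixed points in a way compatible with the local dynamics, and one checks (using that near $0$ the map $f_{a,b}$ behaves like multiplication by $1+a$ while near $1$ like multiplication by $1+b$) which case occurs. In the orientation-reversing case $h(0)=1$, $h$ must conjugate the behaviour at $0$ to that at $1$: the multiplier of $f_{a,b}$ at $0$ is $1+a$ and of $f_{\tilde a,\tilde b}$ at $1$ is $1+\tilde b$, while topological conjugacy by a homeomorphism need not preserve multipliers in general — so here I would instead use a more robust invariant.

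The cleaner robust invariant is the unique discontinuity point. Since $f_{a,b}$ is continuous everywhere on $[0,1]$ except at $x=1/2$ (when $(a,b)\ne(0,0)$) and $h$ is a homeomorphism, the map $f_{\tilde a,\tilde b}=h\circ f_{a,b}\circ h^{-1}$ is discontinuous exactly at $h(1/2)$; but $f_{\tilde a,\tilde b}$ is discontinuous only at $1/2$, so $h(1/2)=1/2$. Combined with monotonicity, $h$ either fixes or swaps $\{0,1\}$ and fixes $1/2$. Now evaluate the conjugacy relation at $x=1/2$ and at points approaching $1/2$ from each side: $f_{a,b}(1/2^-)=\tfrac14(2+a)$ and $f_{a,b}(1/2^+)=\tfrac14(2-b)$ (using the continuity from the left, $f_{a,b}(1/2)=\tfrac{2+a}{4}$), and similarly for $f_{\tilde a,\tilde b}$. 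Pushing these one-sided limit values through $h$ (which is continuous) and matching with the one-sided limits of $f_{\tilde a,\tilde b}$ at $1/2$, together with the orientation of $h$, yields $h(\tfrac{2+a}{4})=\tfrac{2+\tilde a}{4}$ and $h(\tfrac{2-b}{4})=\tfrac{2-\tilde b}{4}$ in the orientation-preserving case, and the swapped relations in the orientation-reversing case. Iterating the conjugacy relation and using that the forward orbits of the one-sided limit points are dense in the invariant set found in Section 4 (or, more elementarily, using that $h$ is determined on a dense set of points generated by the discontinuity), one forces $h=\mathrm{id}$ in the first case — which gives $\tilde a=a,\tilde b=b$ — or $h(x)=1-x$ in the second — which gives $\tilde a=b,\tilde b=a$. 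Either way $(\tilde a,\tilde b)\in\{(a,b),(b,a)\}$, and since $(a,b)=(b,a)$ only when $a=b$, in all cases the conclusion $\tilde a=b,\tilde b=a$ is consistent (and when $a\ne b$ the pair $(\tilde a,\tilde b)=(b,a)$ is the only possibility not already covered by the trivial conjugacy $h=\mathrm{id}$).

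I expect the main obstacle to be the orientation-preserving sub-case of the "only if" direction: ruling out exotic homeomorphisms $h$ that fix $0$, $1/2$, $1$ but are not the identity, yet still conjugate $f_{a,b}$ to some $f_{\tilde a,\tilde b}$ with $(\tilde a,\tilde b)\ne(a,b)$. The way around this is to extract enough rigid data — the images under $h$ of the discontinuity point and of the two one-sided limiting values $\tfrac{2\pm a}{4}$, then of their entire forward orbits — and argue these orbits are dense (a fact that follows from the analysis of the invariant set and the repelling nature of all periodic points established later in the paper), so that continuity of $h$ pins it down uniquely on all of $[0,1]$. A slightly less delicate alternative, if one wants to avoid invoking density, is to note that matching the piecewise-quadratic formulas on both sides forces $h$ to satisfy a functional equation that, combined with $h(0)=0$, $h(1/2)=1/2$, $h(1)=1$ and monotonicity, admits only $h=\mathrm{id}$; I would present whichever of these two routes turns out to be shorter.
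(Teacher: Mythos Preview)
Your ``if'' direction is exactly what the paper does: exhibit $h(x)=1-x$ and verify $h\circ f_{a,b}=f_{b,a}\circ h$ by direct substitution on each piece. The paper's proof stops there --- it does not attempt the ``only if'' direction at all.

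The ``only if'' direction is in fact false as stated: the identity map conjugates $f_{a,b}$ to itself, so $(\tilde a,\tilde b)=(a,b)$ is always permitted, contradicting $\tilde a=b$, $\tilde b=a$ whenever $a\ne b$. Your own argument actually reaches the correct conclusion $(\tilde a,\tilde b)\in\{(a,b),(b,a)\}$, but you then obscure the contradiction with the phrase ``the conclusion $\tilde a=b$, $\tilde b=a$ is consistent'' --- it is not. Beyond this, even aiming for the corrected classification, your sketch has real gaps: the density of the forward orbits of the one-sided limit values $\tfrac{2\pm a}{4}$ is not proved anywhere in the paper (the later sections establish only that periodic points are repelling and that the Lyapunov exponent is nonnegative, not transitivity or density), and your alternative ``functional equation'' route is left entirely unspecified. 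The degenerate cases where one of the parameters vanishes (so that the corresponding map is continuous at $1/2$) also break your discontinuity-matching step. The right move is to prove only the direction the paper proves and, if anything, note that the biconditional as written is misstated.
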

\begin{proof} Let $\tilde{a}=b$ and $\tilde{b}=a$. According to the Definition \ref{def1}, it is necessary to find $h$ homeomorphism satisfying the condition $h\circ f_{a,b}=f_{b,a}\circ h$. We take the function $h(x)=1-x$ as the homeomorphism $h$. Let's check the above mentioned equality:
$$h(f_{a,b}(x))=\left\{
     \begin{array}{ll}
       1-x(1+a-ax), & \hbox{$0\leq x\leq \frac{1}{2}$;} \\
       1-x(1-b+bx), & \hbox{$\frac{1}{2}<x\leq1$.}
     \end{array}
   \right.$$

$$f_{b,a}(h(x))=\left\{
     \begin{array}{ll}
       (1-x)(1+b-b(1-x)), & \hbox{$0\leq 1-x\leq \frac{1}{2}$;} \\
       (1-x)(1-a+a(1-x)), & \hbox{$\frac{1}{2}<1-x\leq1$.}
     \end{array}
   \right.=$$
   $$=\left\{
     \begin{array}{ll}
       1-x(1-b+bx), & \hbox{$\frac{1}{2}\leq x\leq 1$;} \\
       1-x(1+a-ax), & \hbox{$0\leq x <\frac{1}{2}$.}
     \end{array}
   \right.= $$
   $$=\left\{
     \begin{array}{ll}
       1-x(1+a-ax), & \hbox{$0\leq x <\frac{1}{2}$;} \\
       1-x(1-b+bx), & \hbox{$\frac{1}{2}\leq x\leq 1$.}
     \end{array}\right.=1-f_{a,b}(x)=h(f_{a,b}(x))$$
\end{proof}
%  \begin{center}
%\tikz[yscale=3,xscale=3]{
%\draw [help lines,xstep=0.5cm] (0,0) grid (1,1);
%\draw [->] (0,0) -- (1,0) ;
%\draw [->] (0,0) -- (0,1) ;
%\draw [fill=almond,almond] (0.01,0.01) rectangle (0.99,0.99);
%\draw [blue,thick] (0,0) plot [domain=0:1] (\x,\x);
%\node at (1.1,0) {$a$};
%\node at (0,1.1) {$b$};
%\foreach \x in {0,1}{\node [left] at (0,\x) {\tiny \x};}
%\foreach \y in {0,1}{\node [below] at (\y,0) {\tiny \y};}
%}
%\end{center}
\begin{rem}
According to Proposition \ref{lem1} it is sufficient
to study the dynamical system generated by (\ref{mfunc}) in the domain $a\in [0,1]$, $a\leq b$.
\end{rem}
For $a\leq b$ we consider the following possible cases:
\begin{enumerate}

\item $a=b=0$;
\item  $a=0,\, b\ne0$;
\item $a\ne0,\, b\ne0$.

\end{enumerate}

\begin{rem}
 If $a=b=0$ then the function has the form $f_{0,0}(x)=x$,\, $0\leq x \leq1$. This case is not interesting.
\end{rem}

\subsection{The case $a=0$, $b\ne0$}
Then the function is
\begin{equation}\label{pfunc1}
f_{0,b}(x)\equiv f(x)=\left\{
     \begin{array}{ll}
       x, & \hbox{$0\leq x\leq \frac{1}{2}$;} \\
       x(1-b+bx), & \hbox{$\frac{1}{2}<x\leq1$.}
     \end{array}
   \right.
\end{equation}

\begin{prop}\label{pj1}
For the dynamical system generated by function (\ref{pfunc1}) the following hold:
\begin{itemize}
\item[1)] {\rm Fix}$(f)=[0,\frac{1}{2}]\bigcup\{1\}$;
\item[2)] if $x\in (\frac{1}{2},1)$, then there exists $n\in N$ such that
$$f^{n}(x)=p,\ \ f^{n+1}(x)=f(p)=p,$$
where $p\in(\frac{1}{2}-\frac{b}{4},\frac{1}{2}]$;
\item[3)] for any initial point $x_0\in (\frac{1}{2}-\frac{b}{4}, \frac{1}{2}]$ the following recurrence formula expresses the orbit of the points which tends to $x_0$:
$$x_{n+1}=\frac{1}{b}\left(\sqrt{bx_n+\left(\frac{1-b}{2}\right)^2}+\frac{b-1}{2}\right),\, n\geq0.$$
\end{itemize}
\end{prop}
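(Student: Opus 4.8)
The plan is to analyze the piecewise formula \eqref{pfunc1} directly. For part (1), on $[0,\tfrac12]$ we have $f(x)=x$, so every such point is fixed. For $x\in(\tfrac12,1]$ one solves $x(1-b+bx)=x$, i.e. $x(bx-b)=0$, giving $x=0$ or $x=1$; only $x=1$ lies in $(\tfrac12,1]$. Hence $\mathrm{Fix}(f)=[0,\tfrac12]\cup\{1\}$. I would also record here the elementary facts I will use repeatedly on the branch $(\tfrac12,1)$: writing $g(x)=x(1-b+bx)$ one checks $g(\tfrac12)=\tfrac12-\tfrac{b}{4}<\tfrac12$, $g(1)=1$, $g$ is strictly increasing and convex on $[\tfrac12,1]$, and $g(x)<x$ for all $x\in(\tfrac12,1)$ (since $g(x)-x=bx(x-1)<0$). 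So on the open right branch $f$ strictly decreases the value: $f(x)<x$.

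For part (2): take $x_0\in(\tfrac12,1)$. As long as the iterate stays in $(\tfrac12,1)$ the sequence $x_n=f^n(x_0)$ is strictly decreasing and bounded below, hence convergent; but the only fixed point in $[\tfrac12,1)$ is forbidden in the open interval, so the sequence cannot converge inside $(\tfrac12,1)$ — therefore it must exit, i.e. there is a first $n$ with $x_n\le\tfrac12$. Since $x_{n-1}>\tfrac12$ we have $x_n=g(x_{n-1})\in(g(\tfrac12),g(1))=(\tfrac12-\tfrac{b}{4},\,1)$, and combined with $x_n\le\tfrac12$ this gives $x_n=:p\in(\tfrac12-\tfrac{b}{4},\tfrac12]$. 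Once $x_n\le\tfrac12$ we are on the identity branch, so $f(p)=p$ and the orbit is eventually fixed, which is exactly the claim $f^{n}(x)=p$, $f^{n+1}(x)=f(p)=p$. (One should note $p=\tfrac12$ can occur only if some $x_{n-1}=1$, which a point in $(\tfrac12,1)$ never reaches; so in fact $p\in(\tfrac12-\tfrac{b}{4},\tfrac12)$, but the stated half-open interval is certainly valid.)

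For part (3): fix $x_0\in(\tfrac12-\tfrac{b}{4},\tfrac12]$; I want a backward orbit, i.e. points mapping onto $x_0$ under iteration of $f$. The relevant preimage is through the right branch $g$: solving $g(x_{n+1})=x_n$, that is $bx_{n+1}^2+(1-b)x_{n+1}-x_n=0$, by the quadratic formula gives $x_{n+1}=\tfrac{1}{b}\bigl(\sqrt{bx_n+\bigl(\tfrac{1-b}{2}\bigr)^2}+\tfrac{b-1}{2}\bigr)$, which is the stated recurrence (the other root is negative and discarded). The remaining point is well-definedness: one must check that each $x_{n+1}$ produced this way again lies in $(\tfrac12,1]$ so that the right branch is indeed the active one and the recursion may be continued; this follows from monotonicity of $g^{-1}$ together with $g^{-1}(\tfrac12-\tfrac{b}{4})=\tfrac12$ and $g^{-1}(\tfrac12)<1$, so $x_1\in(\tfrac12,1)$ and inductively $x_n\uparrow$ stays in $(\tfrac12,1)$. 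Then by construction $f(x_{n+1})=x_n$, so $f^{n}(x_n)=x_0$, i.e. these points have orbits that reach and then stay at $x_0$; since $g^{-1}$ is a contraction away from the fixed point $1$ one also sees $x_n$ stays bounded away from $1$, consistent with "tends to $x_0$" being read as: the orbit of each such $x_n$ lands on $x_0$ after finitely many steps.

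The main obstacle is not any single computation but the bookkeeping in parts (2) and (3) of \emph{which branch is active at each step} — i.e. proving that the iterates (forward in (2), backward in (3)) remain in the correct subinterval so that the chosen formula for $f$ (or $f^{-1}$) legitimately applies; this is where the convexity/monotonicity estimates on $g$ on $[\tfrac12,1]$ do the real work. Everything else reduces to solving a quadratic and to the monotone-bounded-sequence argument.
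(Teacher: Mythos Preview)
Your proposal is correct and follows essentially the same approach as the paper: solve $f(x)=x$ branchwise for part~(1); use $f(x)<x$ and the lower bound $f(x)>\tfrac12-\tfrac{b}{4}$ on $(\tfrac12,1)$ for part~(2); and invert the quadratic branch $g(x_{n+1})=x_n$ for part~(3). Your treatment is in fact considerably more thorough than the paper's terse proof---in particular your explicit monotone-convergence contradiction in (2) and your well-definedness check for the backward iterates in (3) fill in details the paper leaves implicit.
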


\begin{proof}  1) Follows from a simple analysis of the equation $f(x)=x$.

 2) The function $f(x)$  has properties: $f(x)<x$ and $f(x)>\frac{1}{2}-\frac{b}{4}$ at any point $x\in (\frac{1}{2},1)$.  If $f(x)>\frac{1}{2}$, then $f^2(x)<x$ and so on. So, for any $x\in (\frac{1}{2},1)$ there exists $n \in N$, such that $f^n(x)<\frac{1}{2}$. That's, $f^{n+1}(x)=f(f^n(x))=f(p)=p$, for some  $p\in(\frac{1}{2}-\frac{b}{4},\frac{1}{2}]$ (see Figure \ref{fig:pfunc}).

 3) For proving we find the set of a preimage of $x_0\in (\frac{1}{2}-\frac{b}{4}, \frac{1}{2}]$ with respect to $f(x)$. To do this, we need to solve $f(x_{n+1})=x_n$ for $x_{n+1}\in [0,1]$. As a result, we get $x_{n+1}=\frac{1}{b}\left(\sqrt{bx_n+\left(\frac{1-b}{2}\right)^2}+\frac{b-1}{2}\right)$, $n\geq0$.
\end{proof}

\begin{figure}
\begin{center}
 \includegraphics[scale=0.75]{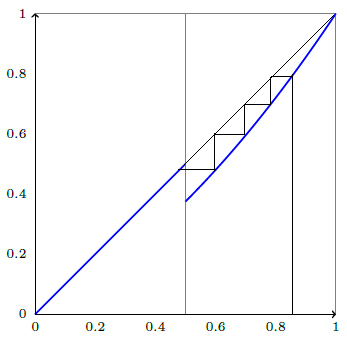}
\caption{The graph and phase portraits of (\ref{pfunc1}) with $b=\frac{1}{2}$.}
\label{fig:pfunc}
\end{center}
\end{figure}

\subsection{The case $a\ne0$, $b\ne0$}

In this case, for simplicity of formulas,  we omit indexes, i.e.
  \begin{center}
$f_{a,b}(x)\equiv f(x)=\left\{
        \begin{array}{ll}
          x(1+a-ax), & \hbox{$0\leq x\leq \frac{1}{2}$;} \\
          x(1-b+bx), & \hbox{$\frac{1}{2}<x\leq 1$.}
        \end{array}
      \right.$
\end{center}
\begin{figure}
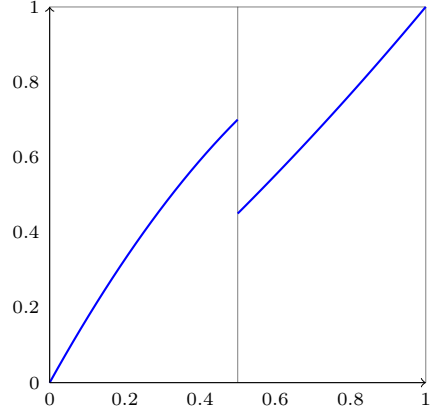

\begin{center}
\tikz[yscale=5,xscale=5]{
\draw [help lines,xstep=0.5cm] (0,0) grid (1,1);
\draw [blue,thick] (0,0) plot [domain=0.5:1] (\x,\x-0.2*\x+0.2*\x*\x);
\draw [blue,thick] (0,0) plot [domain=0:0.5] (\x,\x+0.8*\x-0.8*\x*\x);
\draw [->] (0,0) -- (1,0) ;
\draw [->] (0,0) -- (0,1) ;
\foreach \x in {0,0.2,0.4,0.6,0.8,1}{\node [left] at (0,\x) {\tiny \x};}
\foreach \y in {0,0.2,0.4,0.6,0.8,1}{\node [below] at (\y,0) {\tiny \y};}
}
\end{center}
\caption{The graphics of (\ref{mfunc}) with $a=\frac{1}{5}$, $b=\frac{4}{5}$.}
\end{figure}

\begin{prop}\label{prop2} Let $A=(\frac{1}{2}-\frac{b}{4},\frac{1}{2}+\frac{a}{4}]$ be subset of $[0,1]$. Then $f(A)=A$.
\end{prop}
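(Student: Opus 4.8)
The plan is to split $A$ at the discontinuity point $\frac{1}{2}$ and follow the image of each piece separately, exploiting that each branch of $f$ is strictly increasing. Set $A_1=(\frac{1}{2}-\frac{b}{4},\frac{1}{2}]$ and $A_2=(\frac{1}{2},\frac{1}{2}+\frac{a}{4}]$, so that $A=A_1\cup A_2$; since $a,b\in(0,1]$ one has $\frac{1}{2}-\frac{b}{4}\in[\frac{1}{4},\frac{1}{2})$ and $\frac{1}{2}+\frac{a}{4}\in(\frac{1}{2},\frac{3}{4}]$, hence $A_1\subseteq[0,\frac{1}{2}]$ and $A_2\subseteq(\frac{1}{2},1]$. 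On $[0,\frac{1}{2}]$ the branch $g(x)=x(1+a-ax)$ satisfies $g'(x)=1+a-2ax\ge 1$, and on $(\frac{1}{2},1]$ the branch $h(x)=x(1-b+bx)$ satisfies $h'(x)=1-b+2bx>1$; so $g$ and $h$ are strictly increasing on the respective intervals.

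First I would compute the two images using monotonicity and continuity. Since $g$ is increasing with $g(\frac{1}{2})=\frac{1}{2}+\frac{a}{4}$, we get $f(A_1)=g(A_1)=\big(g(\frac{1}{2}-\frac{b}{4}),\,\frac{1}{2}+\frac{a}{4}\big]$; since $h$ is increasing with $\lim_{x\to 1/2^+}h(x)=\frac{1}{2}-\frac{b}{4}$, we get $f(A_2)=h(A_2)=\big(\frac{1}{2}-\frac{b}{4},\,h(\frac{1}{2}+\frac{a}{4})\big]$. The inclusion $f(A)\subseteq A$ then follows from the elementary identities $g(x)-x=ax(1-x)>0$ and $h(x)-x=-bx(1-x)<0$ for $x\in(0,1)$: these give $g(\frac{1}{2}-\frac{b}{4})>\frac{1}{2}-\frac{b}{4}$ and $h(\frac{1}{2}+\frac{a}{4})<\frac{1}{2}+\frac{a}{4}$, so both $f(A_1)$ and $f(A_2)$ lie inside $A$.

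For the reverse inclusion $A\subseteq f(A)$, note that $f(A_1)\cup f(A_2)$ equals the whole interval $(\frac{1}{2}-\frac{b}{4},\frac{1}{2}+\frac{a}{4}]=A$ exactly when the two half-open pieces leave no gap, i.e. when $g(\frac{1}{2}-\frac{b}{4})\le h(\frac{1}{2}+\frac{a}{4})$; this is the only substantive point. I would verify it by direct expansion: one finds $g(\frac{1}{2}-\frac{b}{4})=\frac{(2-b)(4+a(2+b))}{16}$ and $h(\frac{1}{2}+\frac{a}{4})=\frac{(2+a)(4-b(2-a))}{16}$, and subtracting numerators gives $(2+a)(4-b(2-a))-(2-b)(4+a(2+b))=a^2b+ab^2=ab(a+b)\ge 0$. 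Hence $g(\frac{1}{2}-\frac{b}{4})\le h(\frac{1}{2}+\frac{a}{4})$, the images join without a gap, and together with the previous paragraph this yields $f(A)=A$. The main obstacle is precisely this last algebraic identity; everything else is monotonicity bookkeeping plus the sign facts $g(x)\ge x$ on $[0,\frac{1}{2}]$ and $h(x)\le x$ on $(\frac{1}{2},1]$.
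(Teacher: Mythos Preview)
Your argument is correct and follows the same plan as the paper's: split $A$ at the discontinuity $\tfrac{1}{2}$, use that each branch is strictly increasing to identify $f(A_1)$ and $f(A_2)$, and then verify both inclusions. Your surjectivity step is in fact more explicit than the paper's, which simply asserts $f^{-1}((\tfrac{1}{2}-\tfrac{b}{4},\tfrac{1}{2}])\cup f^{-1}((\tfrac{1}{2},\tfrac{1}{2}+\tfrac{a}{4}])=A$; you actually verify the overlap inequality $g(\tfrac{1}{2}-\tfrac{b}{4})\le h(\tfrac{1}{2}+\tfrac{a}{4})$ via the identity $ab(a+b)\ge 0$.
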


\begin{proof} First of all we prove $f(x)\in A$ for all $x\in A$. For $x\in A$ we have $x\in (\frac{1}{2}-\frac{b}{4},\frac{1}{2}]$ or $x\in (\frac{1}{2},\frac{1}{2}+\frac{a}{4}]$. In both cases we prove that $f(x)\in A$. Let's suppose, $x\in (\frac{1}{2}-\frac{b}{4},\frac{1}{2}]$. It is easy to check that, $f(\frac{1}{2}-\frac{b}{4}),f(\frac{1}{2})\in A$. Then $f(x)\in A$ because $f(x)$ is monotonically increasing. For the second case, i.e., $x\in (\frac{1}{2},\frac{1}{2}+\frac{a}{4}]$ we may check similarly. Also $f^{-1}((\frac{1}{2}-\frac{b}{4},\frac{1}{2}])\cup f^{-1}((\frac{1}{2},\frac{1}{2}+\frac{a}{4}])=A$. Therefore, $f(A)=A$.
\end{proof}

\begin{lem}\label{lj1}
For any $x\in [0,1]\setminus A$ there exists $n_0(x)\in N$, such that $f^{n_{0}(x)}(x)\in A$.
\end{lem}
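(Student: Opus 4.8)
The plan is to exploit the monotonicity of each of the two smooth branches of $f$ together with the fact that $f$ has only the two fixed points $0$ and $1$ (a one-line check: $f(x)=x$ forces $ax(1-x)=0$ on $[0,\frac{1}{2}]$ and $bx(1-x)=0$ on $(\frac{1}{2},1]$). First I would record, straight from \eqref{mfunc}, the shape of $f$: on $[0,\frac{1}{2}]$ it is $f(x)=x+ax(1-x)$, strictly increasing, mapping $[0,\frac{1}{2}]$ onto $[0,\frac{1}{2}+\frac{a}{4}]$, with $f(x)>x$ for $x\in(0,\frac{1}{2})$; on $(\frac{1}{2},1]$ it is $f(x)=x-bx(1-x)$, strictly increasing, mapping $(\frac{1}{2},1]$ onto $(\frac{1}{2}-\frac{b}{4},1]$, with $f(x)<x$ for $x\in(\frac{1}{2},1)$ (these branch-range computations are exactly those already made in the proof of Proposition \ref{prop2}). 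Since $[0,1]\setminus A=[0,\frac{1}{2}-\frac{b}{4}]\cup(\frac{1}{2}+\frac{a}{4},1]$, and since $x=0$ and $x=1$ are fixed and hence never enter $A$, the assertion is to be understood for $x\in[0,1]\setminus(A\cup\{0,1\})$, and I would split into the two remaining cases.

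For $x\in(0,\frac{1}{2}-\frac{b}{4}]$ I would argue: as long as an iterate $x_n:=f^{n}(x)$ stays in $(0,\frac{1}{2}]$ it stays on the left branch, so $0<x_n<x_{n+1}=f(x_n)\le\frac{1}{2}+\frac{a}{4}$. If every iterate stayed in $(0,\frac{1}{2}]$, the increasing bounded sequence $(x_n)$ would converge to some $L\in(0,\frac{1}{2}]$, and continuity of $f$ on $[0,\frac{1}{2}]$ would make $L$ a fixed point of $f$ in $(0,\frac{1}{2}]$ --- impossible. Hence there is a least $n_0$ with $x_{n_0}>\frac{1}{2}$; since $x_{n_0-1}\in(0,\frac{1}{2}]$ and the left branch has range $[0,\frac{1}{2}+\frac{a}{4}]$, we get $x_{n_0}\in(\frac{1}{2},\frac{1}{2}+\frac{a}{4}]\subseteq A$. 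The case $x\in(\frac{1}{2}+\frac{a}{4},1)$ is the mirror image: while $x_n\in(\frac{1}{2},1)$ the orbit stays on the right branch with $\frac{1}{2}-\frac{b}{4}<x_{n+1}=f(x_n)<x_n<1$, so it can never drop to or below $\frac{1}{2}-\frac{b}{4}$; if it stayed in $(\frac{1}{2}+\frac{a}{4},1)$ forever, the decreasing bounded sequence would converge to a fixed point of $f$ in $(\frac{1}{2},1)$, again impossible; so there is a least $n_0$ with $x_{n_0}\le\frac{1}{2}+\frac{a}{4}$, and since $x_{n_0}=f(x_{n_0-1})$ with $x_{n_0-1}\in(\frac{1}{2},1)$ the range of the right branch gives $x_{n_0}\in(\frac{1}{2}-\frac{b}{4},\frac{1}{2}+\frac{a}{4}]=A$. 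In either case the index $n_0=n_0(x)$ is the one claimed.

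The main point to be careful about is the discontinuity at $\frac{1}{2}$: the monotone-convergence step is legitimate only on a single branch, so the argument must first establish that the orbit remains on one branch up to the very iterate that carries it into $A$, and then invoke one-sided continuity of that branch at the limit. Apart from this, everything reduces to the elementary branch-range facts already contained in the proof of Proposition \ref{prop2} and to the bookkeeping of the first-exit index $n_0$; the only honest caveat is the pair of fixed points $0$ and $1$, at which the stated conclusion fails and which must therefore be excluded from (or tacitly understood in) the statement.
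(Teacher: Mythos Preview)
Your argument is correct and is essentially the fleshed-out version of the paper's proof, which is only a one-line hint (``follows from the properties that the function is convex in $[0,\frac12]$, is concave in $(\frac12,1]$ and monotone in each interval'').  The underlying mechanism is the same: on each branch the iterates form a monotone sequence moving toward $\frac12$, and the branch ranges $[0,\frac12+\frac a4]$ and $(\frac12-\frac b4,1]$ force the first crossing to land in $A$.

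Two remarks on the comparison.  First, you bypass the curvature information entirely and work instead with the sign of $f(x)-x$ together with the absence of interior fixed points; this is slightly more elementary and, incidentally, avoids the paper's slip (the left branch is actually concave and the right branch convex, not the other way round).  Second, your explicit caveat about the fixed points $0$ and $1$ is well taken: as literally stated the lemma fails at these two points, and the paper does not mention this exclusion.
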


The proof of Lemma follows from the properties that the function is convex in $\left[0,\frac{1}{2}\right]$, is concave in $\left(\frac{1}{2},1\right]$ and monotone in each interval.

%{Fixed and periodic points}
\begin{rem}
    Obviously, the set of fixed points is $\rm{Fix}(f)=\{0,1\}$ for $ab\ne 0$. Besides we have $|f'(0)|=1+a>1, |f'(1)|=1+b>1$.
    Thus {\it both} fixed points are repeller. Therefore both species will always survive.
\end{rem}
\begin{thm}\label{tj1}
The dynamical system generated by the function (\ref{mfunc}) has 2-periodic points if and only if the parameters $a$, $b$ satisfy the following conditions
\begin{equation}\label{con1}
  a\in (0,1), \, b< \frac{a}{1-a}
\end{equation}.
\end{thm}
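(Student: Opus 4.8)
The plan is to analyze 2-periodic points by solving $f(f(x)) = x$ directly, splitting into cases according to which branch of $f$ is used at each of the two steps. A genuine 2-periodic point $x$ (not a fixed point) must alternate between the two branches: if $x$ and $f(x)$ both lay in $[0,\tfrac12]$ then $f$ would be the identity there is no—wait, on $[0,\tfrac12]$ the map is $x(1+a-ax)$, which has $0$ as its only fixed point and is strictly increasing with $f(x)<x$ on $(0,\tfrac12]$, so no 2-cycle can live entirely in $[0,\tfrac12]$; similarly $f(x) < x$ on $(\tfrac12,1)$ rules out a 2-cycle entirely in $(\tfrac12,1]$. Hence any 2-cycle consists of one point $x_1 \in (0,\tfrac12]$ and one point $x_2 \in (\tfrac12,1)$ with $f(x_1)=x_2$ via the first branch and $f(x_2)=x_1$ via the second branch. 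By Proposition \ref{prop2} and Lemma \ref{lj1} the whole cycle must in fact lie in $A=(\tfrac12-\tfrac b4,\tfrac12+\tfrac a4]$, so we additionally know $x_1\in(\tfrac12-\tfrac b4,\tfrac12]$ and $x_2\in(\tfrac12,\tfrac12+\tfrac a4]$, which will be used to locate the relevant root.

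Next I would set up the algebra. Writing $x_2 = x_1(1+a-ax_1)$ and $x_1 = x_2(1-b+bx_2)$ and substituting gives a single polynomial equation $P(x_1)=0$ of degree $4$ in $x_1$. Since $x_1=0$ is a (spurious, fixed-point) solution and, by the symmetry/structure of the construction, one expects the fixed-point values to factor out, I would divide out the obvious fixed-point factors to reduce to a quadratic (or at worst cubic) $Q(x_1)=0$ governing true 2-cycles. The existence of a 2-periodic point is then equivalent to $Q$ having a root in the interval $(\tfrac12-\tfrac b4,\tfrac12]$. Evaluating $Q$ at the endpoints $x_1=\tfrac12$ and $x_1=\tfrac12-\tfrac b4$ (or examining its discriminant together with the location of its vertex) should produce the stated inequality $b<\tfrac{a}{1-a}$, equivalently $a+ab-b>0$, as the precise condition; the constraint $a\in(0,1)$ enters both because we have assumed $a\le b$ with $a\ne 0\ne b$ and because $\tfrac{a}{1-a}$ must be a meaningful positive bound (for $a=1$ the condition would be vacuous/automatic, so that boundary case is excluded or handled separately).

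For the converse direction, assuming \eqref{con1} I would exhibit the root of $Q$ explicitly (a square-root expression analogous to the preimage formula in Proposition \ref{pj1}(3)), verify it lies in $(\tfrac12-\tfrac b4,\tfrac12]$, define $x_2=f(x_1)$, check $x_2\in(\tfrac12,\tfrac12+\tfrac a4]$, and confirm $f(x_2)=x_1$; conversely if \eqref{con1} fails then $Q$ has no root in the required subinterval (its roots are either complex or fall outside $(\tfrac12-\tfrac b4,\tfrac12]$), so no 2-cycle exists.

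The main obstacle is the bookkeeping in reducing the degree-4 equation to the governing quadratic and then translating "has a root in $(\tfrac12-\tfrac b4,\tfrac12]$" cleanly into the single inequality $b<\tfrac{a}{1-a}$: one must be careful that the endpoint $x_1=\tfrac12$ (where the discontinuity sits) is included correctly and that no extraneous roots of the quartic—coming from the "wrong" branch choices—are mistaken for genuine 2-cycles. Keeping the interval constraints from Proposition \ref{prop2} in play throughout is what makes this tractable rather than a brute-force quartic root analysis.
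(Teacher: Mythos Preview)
Your plan is essentially the paper's own approach: rule out 2-cycles living in a single branch, set up the equation for a cycle that alternates branches, reduce to a quadratic, and then translate the requirement that its root sit in the correct subinterval into an inequality on $(a,b)$. The paper streamlines your ``factor out the fixed-point roots'' step by working directly with $\dfrac{f(f(x))-x}{f(x)-x}=0$, which kills the fixed points in one stroke and hands you the quadratic $abx^{2}-(2b+ab)x+b+\tfrac{b}{a}-1=0$ immediately; otherwise the skeleton is the same.

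Two points to correct or sharpen. First, a sign slip: on $(0,\tfrac12]$ one has $f(x)=x+ax(1-x)>x$, not $f(x)<x$; your conclusion that no 2-cycle can live entirely in one branch is still correct, since $f$ is strictly increasing there and $f(x)>x$ forces $f^{2}(x)>x$. Second, and more substantively, the inequality \eqref{con1} in the paper does \emph{not} come from locating the smaller cycle point $x_{1}$ inside $(\tfrac12-\tfrac b4,\tfrac12]$ as you propose; evaluating your quadratic at $x_{1}=\tfrac12$ yields the bound $b\le \tfrac{4a}{4-a^{2}}$ (the paper's condition~\eqref{f2}), not $b<\tfrac{a}{1-a}$. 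The paper obtains \eqref{con1} from the constraint on the \emph{other} cycle point, namely $f(x_{1})\in(\tfrac12,1)$, which gives its condition~\eqref{f3}; the inequality $\tfrac{b}{b+1}<a$ there is exactly $b<\tfrac{a}{1-a}$. So in carrying out your plan you must impose the interval constraint on $x_{2}=f(x_{1})$ as well, not only on $x_{1}$; you do mention checking $x_{2}\in(\tfrac12,\tfrac12+\tfrac a4]$ in your converse paragraph, but it is that check---not the endpoint evaluation you foreground---that actually produces \eqref{con1}.
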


\begin{proof} \textit{Necessity.} Let's suppose that there are 2-periodic points, i.e.,  the following equation has roots:

\begin{equation}\label{f1}
\frac{f(f(x))-x}{f(x)-x}=0
\end{equation}

The 2-fold composition of $f(x)$ with itself is
$$f(f(x))=\left\{
           \begin{array}{ll}
             x(1+a-ax)(1+a-ax(1+a-ax)), & \hbox{$x, f(x)\in[0,\frac{1}{2}]$;} \\[2mm]
             x(1+a-ax)(1-b+bx(1+a-ax)), & \hbox{$x\in[0,\frac{1}{2}], f(x)\in(\frac{1}{2},1]$;} \\[2mm]
             x(1-b+bx)(1+a-ax(1-b+bx)), & \hbox{$x\in(\frac{1}{2},1], f(x)\in[0,\frac{1}{2}]$;} \\[2mm]
             x(1-b+bx)(1-b+bx(1-b+bx)), & \hbox{$x, f(x)\in(\frac{1}{2},1]$.}
           \end{array}
         \right.
$$
By solving equation (\ref{f1}) it's easy to check that there is no periodic points in cases 1 and 4 of this equality.

Now we need solve the equation (\ref{f1}) for cases 2 and 3.\\
 For the 2nd case ($x\in(0,\frac{1}{2}], f(x)\in(\frac{1}{2},1)$);\\

 $$\frac{x(1+a-ax)(1-b+bx(1+a-ax))}{x(1+a-ax)-x}=0 \Rightarrow $$
 $$abx^2-(2b+ab)x+b+\frac{b}{a}-1=0 \Rightarrow$$
$$x_{1}=\frac{ab+2b+\sqrt{ab(ab+4)}}{2ab}, \  \ x_{2}=\frac{ab+2b-\sqrt{ab(ab+4)}}{2ab}$$

 For the root $x_{1}$ we have $x_{1}=\frac{1}{2}+\frac{2b+\sqrt{ab(ab+4)}}{2ab}\geq \frac{1}{2}$, that's why it's not a 2-periodic point.
 If
\begin{equation}\label{f2}
  \frac{a}{a+1}< b\leq \frac{4a}{4-a^{2}}
\end{equation}
then $0< x_{2} \leq \frac{1}{2}$. Since $f(x)\in(\frac{1}{2},1)$ for $f(x_{2})=\frac{1}{2}+\frac{-2a+\sqrt{ab(ab+4)}}{2ab}$ we get
\begin{equation}\label{f3}
  \frac{b}{b+1}< a< \frac{4b}{4-b^{2}}.
\end{equation}

So if the conditions (\ref{f2}) and (\ref{f3}) hold then $\text{Per}_{2}(f)=\{x_{2},f(x_{2})\}$. It is easy to see that conditions (\ref{f2}) and (\ref{f3}) are equivalent to (\ref{con1}).

\textit{Sufficiency.} If the parameters $a$ and $b$ satisfy (\ref{con1}) then the equation (\ref{f1}) has roots. And these roots are 2-periodic points.
\end{proof}

\begin{thm}\label{thm1}If $f$ has a periodic point then the point is a repelling.
\end{thm}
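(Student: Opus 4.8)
The plan is to reduce everything to the elementary fact that $f'(x)\ge 1$ at every point of $[0,1]$ where $f$ is differentiable, with equality only at $x=\tfrac12$, and then to apply the chain rule along a periodic orbit. First I would compute the derivative on each branch: for $x\in[0,\tfrac12]$ one gets $f'(x)=(1+a)-2ax=1+a(1-2x)$, and for $x\in(\tfrac12,1]$ one gets $f'(x)=(1-b)+2bx=1+b(2x-1)$. Since $a,b>0$, and $1-2x\ge 0$ on $[0,\tfrac12]$ while $2x-1>0$ on $(\tfrac12,1]$, this gives $f'(x)\in[1,1+a]$ on the left branch and $f'(x)\in(1,1+b]$ on the right one; in particular $f'(x)\ge 1$ everywhere, and $f'(x)=1$ only at $x=\tfrac12$ (where we take the left derivative, $f(\tfrac12)$ being computed from the branch $x(1+a-ax)$).

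Next I would dispose of the fixed points: by the Remark preceding Theorem~\ref{tj1}, $\mathrm{Fix}(f)=\{0,1\}$ with $|f'(0)|=1+a>1$ and $|f'(1)|=1+b>1$, so both are repelling. Now let $x^{*}$ be a periodic point whose orbit $x^{*}=x_{0},x_{1},\dots,x_{p-1}$ of prime period $p\ge 2$ does not meet $\tfrac12$. Then $f$ is smooth at each $x_{i}$, the chain rule gives $(f^{p})'(x^{*})=\prod_{i=0}^{p-1}f'(x_{i})$, every factor is $\ge 1$, and since the $x_{i}$ are distinct none equals $\tfrac12$, so each factor is in fact $>1$; hence $(f^{p})'(x^{*})>1$ and $x^{*}$ is repelling. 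If one instead takes $p$ to be a non-prime period, $(f^{p})'(x^{*})$ is a positive power of the prime-period multiplier, hence still $>1$, so the conclusion does not depend on the choice of $p$.

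The delicate point --- and the one I expect to be the only real obstacle --- is the possibility that the periodic orbit passes through the jump discontinuity $x=\tfrac12$; this genuinely occurs, since by Theorem~\ref{tj1} the root $x_{2}$ equals $\tfrac12$ when $b=\tfrac{4a}{4-a^{2}}$, so that $\tfrac12$ is then a $2$-periodic point. In that situation $f^{p}$ is not two-sided differentiable at the orbit, but I would argue as follows: the relevant value at $\tfrac12$ is computed from the left branch, whose derivative there equals $1$, so the product estimate from the previous paragraph still applies and yields $(f^{p})'(x^{*})\ge f'(x_{j})>1$ for any index $j$ with $x_{j}\ne\tfrac12$ (there is at least one, since the $x_{i}$ are distinct and $p\ge 2$, the latter because $f(\tfrac12)=\tfrac12+\tfrac a4\ne\tfrac12$). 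If one prefers to avoid one-sided derivatives, the same computation shows that points started slightly to either side of $\tfrac12$ leave a fixed neighbourhood of the orbit under iteration, which is exactly the assertion that the orbit is repelling. Everything else is a routine derivative computation.
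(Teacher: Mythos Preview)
Your argument is correct and follows essentially the same route as the paper: compute $f'(x)>1$ on each branch and apply the chain rule along the orbit to conclude $|(f^{p})'(x_{0})|>1$. The paper's version is terser and simply invokes $f'(x)>1$ for $x\in[0,1]\setminus\{\tfrac12\}$ without separately discussing the possibility that the orbit meets $\tfrac12$; your handling of that boundary case is more careful than the original.
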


\begin{proof} Let's suppose $x_{0},x_{1},...,x_{n-1}$ with $x_{i}=f^{i}(x_{0})$, that's these points lie on a cycle of period $n$ for $f$. Then according to the Chain Rule we have
$$(f^{n})'(x_{0})=(f\circ f^{n-1})'(x_{0})$$
$$=f'(f^{n-1}(x_{0}))(f^{n-1})'(x_{0})$$
$$=f'(x_{n-1})(f\circ f^{n-2})'(x_{0})$$
$$=f'(x_{n-1})f'(f^{n-2}x_{0})(f^{n-2})'(x_{0})$$
$$=f'(x_{n-1})f'(x_{n-2})(f\circ f^{n-3})'(x_{0})$$
$$\ldots, \ldots, \ldots$$
$$=f'(x_{n-1})f'(x_{n-2})\ldots f'(x_{2})f'(x_{1})f'(x_{0}).$$
We have
$$f'(x)=\left\{
          \begin{array}{ll}
            1+a-2ax, & 0\leq x< \frac{1}{2}; \\[3mm]
            does\, not \, exist, & x=\frac{1}{2}\ \ and\ \ f'(\frac{1}{2}-0)=1,\ \ f'(\frac{1}{2}+0)=-\infty;\\[3mm]
            1-b+2bx, & \frac{1}{2}<x\leq 1.
          \end{array}
        \right.$$
So, we have $0\leq x< \frac{1}{2}\Rightarrow 0\geq -2ax>-a\Rightarrow1+a\geq1+a-2ax>1$,
i.e., $1+a\geq f'(x)>1$, when $0\leq x<\frac{1}{2}$. Similarly $1+b\geq f'(x)>1$, when $\frac{1}{2}<x\leq 1$. Therefore, $|f'(x)|>1$ for all $x\in[0,1]\setminus\frac{1}{2}$. Since $f'(x)>1$ for any $x\in[0,1]\setminus \frac{1}{2}$ we have  $|(f^{n})'(x_{0})|>1$.

\end{proof}

Let's consider the following cases:

\begin{equation}\label{2-2}
a\in(0,1], \ \ \ a\leq b\leq \frac{4a}{4-a^2}
\end{equation}

%\begin{center}
%\includegraphics[width=6cm]{24shart}
%\end{center}

\begin{equation}\label{2-1}
a\in(0,1], \ \ \ b> \frac{4a}{4-a^2}
\end{equation}

%\begin{center}
%\includegraphics[width=6cm]{25a}
%\end{center}

 In these cases the dynamical system has several properties which showed on graphics(only in the invariant set $A$):
 \begin{figure}[h]
\begin{center}
\includegraphics[width=6cm]{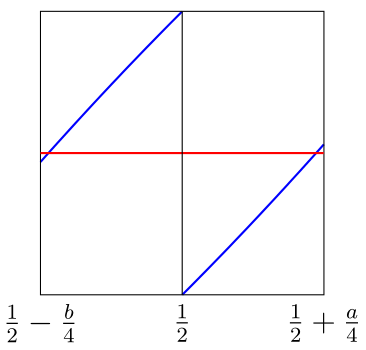}
\end{center}
\caption{The graphics of $y=1/2$ (red) and the function (\ref{mfunc}) on the invariant set $A$ when the condition (\ref{2-2}) holds.}
%; in (B) the condition (\ref{2-1}) holds.}
\label{fig:cross}
\end{figure}
\begin{figure}[h]
\begin{center}
\includegraphics[width=8cm]{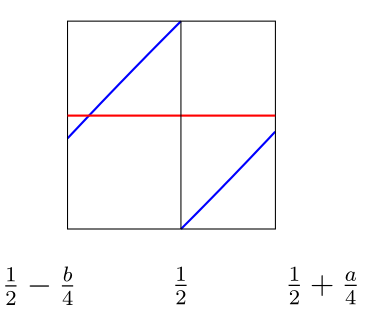}
\end{center}
\caption{The graphics of the function (\ref{mfunc}) in the invariant set $A$ when the condition (\ref{2-1}) holds.}
\label{fig:cross1}
\end{figure}
%\begin{figure}
%  \centering
%  \begin{subfigure}[b]{0.4\linewidth}
%    \includegraphics[width=\linewidth]{GraphA}
%  \caption{}
%  \end{subfigure}
%\ \ \
%  \begin{subfigure}[b]{0.4\linewidth}
%    \includegraphics[width=\linewidth]{GraphB}
%  \caption{}
%  \end{subfigure}
%  \caption{The graphics of the function (\ref{mfunc}) in the invariant set $A$. In (A) the condition (\ref{2-2}) holds; in (B) the condition (\ref{2-1}) holds.}
%  \label{fig:cross}
%\end{figure}

 If the relation (\ref{2-2}) holds for the dynamical system generated by (\ref{mfunc}) then the graphics crosses with  the line $y=\frac{1}{2}$ two times, if (\ref{2-1}) holds then it crossed with that line one time (see Figures \ref{fig:cross} and \ref{fig:cross1}). We use these properties later.

\subsection{The case $a\in(0,1]$, $a\leq b\leq \frac{4a}{4-a^2}$}

\begin{prop}\label{prop1}
If the dynamical system generated by the function (\ref{mfunc}) satisfies the condition (\ref{2-2}), then the dynamical system has no odd periodic points.
\end{prop}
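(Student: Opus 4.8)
The plan is to pass to circle dynamics on the invariant set $A$ and to show that under condition (\ref{2-2}) the induced circle map has rotation number exactly $1/2$; every periodic orbit then has period equal to twice its winding number, hence even. First note that every periodic point lies in $A$: if $x$ is periodic with $x\notin A$, Lemma \ref{lj1} puts some iterate $f^{n_0(x)}(x)$ into $A$, and since $f(A)=A$ by Proposition \ref{prop2} all later iterates stay in $A$; running once around the finite orbit then forces $x\in A$. So we may work entirely on $A=(\frac12-\frac b4,\frac12+\frac a4]$.

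Write $A_1=(\frac12-\frac b4,\frac12]$ and $A_2=(\frac12,\frac12+\frac a4]$. On $A_1$ the map $f(x)=x(1+a-ax)$ is increasing with $f(x)>x$ and $f(\frac12)=\frac12+\frac a4$; on $A_2$ the map $f(x)=x(1-b+bx)$ is increasing with $f(x)<x$ and $f(\frac12+0)=\frac12-\frac b4$. Hence, after identifying the two endpoints of $A$, $f$ descends to a degree-one circle map $g$ which is monotone increasing except for a single downward jump, the former discontinuity at $x=\frac12$ having become a point of continuity on the circle. In particular the $\{A_1,A_2\}$-itinerary of any orbit is a concatenation of alternating blocks of $A_1$'s and $A_2$'s, and the two points of $A$ at which $f=\frac12$ — which exist precisely because (\ref{2-2}) gives two crossings of the line $y=\frac12$ — together with $\frac12$ cut $A$ into the four pieces on which $f^2$ is monotone.

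Next I would pin the rotation number of $g$. Condition (\ref{2-2}) gives $b\le\frac{4a}{4-a^2}<\frac{a}{1-a}$ for $a\in(0,1)$, so by Theorem \ref{tj1} there is a $2$-periodic orbit with one point in $A_1$ and one in $A_2$ (the degenerate case $a=b=1$ being checked by hand); this orbit winds once around the circle in two steps, so $1/2$ belongs to the rotation set of $g$. It then remains to show the rotation set is exactly $\{1/2\}$, i.e.\ that no orbit winds around $A$ at a rate other than one half per two iterates. I would do this by squeezing $g$ between its two monotone completions $\bar g_-\le g\le\bar g_+$ (filling the jump at the lower, resp.\ upper, level) and verifying $\rho(\bar g_-)=\rho(\bar g_+)=\frac12$; concretely one controls two consecutive iterates, i.e.\ analyses the graph of $f^2$ on the four monotone pieces of $A$ and shows it meets the diagonal only at the two points of the $2$-cycle. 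Here the inequalities $a\le b\le\frac{4a}{4-a^2}$ are used in an essential way: they keep $f(\frac12-\frac b4)$ close enough below $\frac12$ and $f(\frac12+\frac a4)$ close enough above $\frac12$ that the $f^2$-displacement cannot accumulate to a winding rate below or above $1/2$.

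Finally, suppose $x_0$ is a periodic point of $f$ of prime period $p$. On a lift $G$ of $g$ one has $G^p(\tilde x_0)=\tilde x_0+k\,|A|$, where $k$ is the winding number of the orbit, and $k/p$ must lie in the rotation set $\{1/2\}$; hence $p=2k$ is even. Thus $f$ has no periodic point of odd period, which is the assertion of the proposition. The one genuinely hard point is the middle step: establishing that the rotation set is the single point $\{1/2\}$ and not a larger interval. The crude a priori bounds on the per-iterate displacement of $g$ (its jump ends $f(\frac12-\frac b4)$ and $f(\frac12+\frac a4)$ are only approximately $\frac12$) do not by themselves force the rotation number, so the explicit quadratic form of $f$ together with (\ref{2-2}) must be exploited to rule out periodic orbits of every rotation number $\neq 1/2$.
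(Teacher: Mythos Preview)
Your rotation-number framework is a legitimate and genuinely different route from the paper's, but as you yourself flag, the whole argument hinges on showing the rotation set of $g$ is exactly $\{1/2\}$, and you do not actually do this. The sketch you give is not enough: knowing that the graph of $f^{2}$ on $A$ meets the diagonal only at the two points of the $2$-cycle tells you about fixed points of $f^{2}$, not about the rotation numbers of the monotone completions $\bar g_{\pm}$, which differ from $g$ on the jump interval and may well have their own $2$-cycles (or fail to). Without a concrete computation showing $\rho(\bar g_{-})=\rho(\bar g_{+})=\tfrac12$ under (\ref{2-2}), the final step ``$p=2k$'' is unsupported. So the proposal has a genuine gap at precisely the point you identify as hard.

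By contrast, the paper bypasses rotation theory entirely with a short combinatorial argument. It refines the invariant set $A$ into four explicit subintervals $A_{1},A_{2},A_{3},A_{4}$ (near the two ends and the two sides of $\tfrac12$) and records the inclusions $f(A_{1})\subset A_{2}\cup A_{3}$, $f(A_{2})\subset A_{4}$, $f(A_{3})\subset A_{1}$, $f(A_{4})\subset A_{2}\cup A_{3}$ (this is Lemma~\ref{lem2}). The resulting transition graph is bipartite with parts $\{A_{1},A_{4}\}$ and $\{A_{2},A_{3}\}$, so every cycle in it has even length; the complement $K_{1}\cup K_{2}$ is handled by the observation that $f$ swaps $K_{1}$ and $K_{2}$. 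Since the union $A_{1}\cup\cdots\cup A_{4}$ is forward-invariant, any periodic orbit lies either entirely in it or entirely in $K_{1}\cup K_{2}$, and in both cases the period is even. This is exactly the kind of finite combinatorial control that your squeezing argument would need to produce; Lemma~\ref{lem2} does it directly, and condition (\ref{2-2}) enters only to make the four intervals sit in the correct order.
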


We use the following Lemma to prove Proposition \ref{prop1}.

\begin{lem}\label{lem2}
If the dynamical system associated by the function (\ref{mfunc}) satisfies (\ref{2-2}) then  the followings hold for sets $A_1$, $A_2$, $A_3$, $A_4$:
\begin{itemize}
\item $f(A_1)\subset A_2\cup A_3$;
\item $f(A_2)\subset A_4$;
\item $f(A_3)\subset A_1$;
\item $f(A_4)\subset A_2\cup A_3$.
\end{itemize}
Where
$A_1=(\frac{1}{2}-\frac{b}{4};(\frac{1}{2}+\frac{a}{4})(1-\frac{b}{2}+\frac{ab}{4})(1-\frac{b}{2}+b(\frac{a-b}{4}+\frac{a^2b}{16}))]$,\\[3mm]
$A_2=((\frac{1}{2}-\frac{b}{4})(1+\frac{a}{2}+\frac{ab}{4});\frac{1}{2}]$, \ \ \ $A_3=(\frac{1}{2},(\frac{1}{2}+\frac{a}{4})(1-\frac{b}{2}+\frac{ab}{4})]$,\\[3mm]
$A_4=((\frac{1}{2}-\frac{b}{4})(1+\frac{a}{2}+\frac{ab}{4})(1+\frac{a}{2}-a(\frac{a-b}{4}-\frac{ab^2}{16}));\frac{1}{2}+\frac{a}{4};]$.\\
\end{lem}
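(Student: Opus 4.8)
The plan is to track the image of each of the four subintervals $A_1,A_2,A_3,A_4$ under $f$ using the monotonicity of $f$ on each side of the discontinuity together with the explicit formula for $f$. By Proposition~\ref{prop2} the set $A=(\frac12-\frac b4,\frac12+\frac a4]$ is invariant, and under condition~(\ref{2-2}) the graph of $f$ restricted to $A$ crosses the line $y=\tfrac12$ exactly twice; call the two crossing abscissas $c_1\in(\frac12-\frac b4,\frac12]$ and $c_2\in(\frac12,\frac12+\frac a4]$. I would define $A_2=(c_1,\frac12]$, $A_1=(\frac12-\frac b4,c_1]$, $A_3=(\frac12,c_2]$, $A_4=(c_2,\frac12+\frac a4]$, and then check that the endpoint values written in the statement are precisely $f$ and $f^2$ evaluated at the breakpoints $\frac12-\frac b4$, $\frac12$, $\frac12+\frac a4$; for instance $(\frac12-\frac b4)(1+\frac a2+\frac{ab}4)=f(\frac12-\frac b4)$ on the left branch, which is why $c_1$ has exactly that value.

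The core of the argument is then four monotonicity computations. On $[0,\frac12]$ the branch $x\mapsto x(1+a-ax)$ is increasing (its derivative $1+a-2ax\ge 1$ by the computation in the proof of Theorem~\ref{thm1}); on $(\frac12,1]$ the branch $x\mapsto x(1-b+bx)$ is also increasing with derivative $1-b+2bx\ge 1$. So on $A_1\cup A_2\subset[0,\frac12]$ and on $A_3\cup A_4\subset(\frac12,1]$ the map $f$ is increasing, and the image of a subinterval is the subinterval between the images of its endpoints (taking one-sided limits at the discontinuity). First I would show $f(A_1)\subset A_2\cup A_3$: since $f(\frac12-\frac b4)=$ the left endpoint of $A_2$ and $f(c_1)=\frac12$ with $f$ increasing through $c_2$, the image runs from just above $f(\frac12-\frac b4)$ up to $f(\frac12)=\frac12$, hence lands in $A_2$, while on the part of $A_1$ with $f>\frac12$ — wait, $A_1\subset[0,\frac12)$ so $f(A_1)$ straddles $\frac12$ only if $f(c_1)=\frac12$ is an endpoint; more carefully $f$ on $[0,\frac12]$ maps into $[f(0),f(\frac12)]=[0,\frac12]$, so in fact $f(A_1)\subset(f(\frac12-\frac b4),\frac12]=A_2\cup A_3$ only if one allows $A_3$, so I would instead note $f(A_1)$ lies in $(f(\frac12-\frac b4),\frac12]\subset A_2$ when $c_1$ is the right endpoint — the precise inclusion $A_2\cup A_3$ in the statement accounts for the boundary point $\frac12$ shared by $A_2$ and $A_3$. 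For $f(A_3)\subset A_1$: $A_3=(\frac12,c_2]$, $f$ increasing on $(\frac12,1]$, $f(\frac12+0)=\frac12-\frac b4$ (the infimum, not attained) and $f(c_2)=(\frac12+\frac a4)(1-\frac b2+\frac{ab}4)$, which is the right endpoint of $A_1$; so $f(A_3)=(\frac12-\frac b4,\,f(c_2)]=A_1$. For $f(A_2)\subset A_4$ and $f(A_4)\subset A_2\cup A_3$ one composes: $A_2\xrightarrow{f}$ a subinterval of $A_3$ (since $A_2\subset A_1$'s complement with $f(A_2)\subset(\frac12,\frac12+\frac a4]$), then one more application of the increasing right branch gives the value $(\frac12-\frac b4)(1+\frac a2+\frac{ab}4)(1+\frac a2-a(\frac{a-b}4-\frac{ab^2}{16}))$ at the right end of $A_4$; so I would verify $f(A_2)\subset A_4$ directly by checking $f(c_1)=\frac12\mapsto$ right endpoint of $A_4$ and $f(\frac12)=\frac12$, hmm — better to recompute: $f$ on $A_2=(c_1,\frac12]$ lands in $(\frac12,\frac12+\frac a4]$... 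I would organize this as: compute $f$ at the three breakpoints and their relevant second iterates, match them to the stated endpoints, and invoke monotonicity.

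The main obstacle I anticipate is bookkeeping at the discontinuity point $x=\tfrac12$: $f(\tfrac12-0)=\tfrac12$ while $f(\tfrac12+0)=\tfrac12-\tfrac b4$, so "the image of an interval with endpoint $\tfrac12$" needs one-sided limits handled consistently, and the half-open conventions $(\cdot,\cdot]$ must be respected so that, e.g., $\tfrac12\in A_2$ maps to $f(\tfrac12)=\tfrac12\in A_3$ (left-continuous branch), not to $\tfrac12-\tfrac b4$. This is exactly why two of the four inclusions land in the union $A_2\cup A_3$ rather than in a single piece. The second, purely computational, obstacle is confirming that the baroque endpoint expressions in the statement really are $f$ and $f\circ f$ of the breakpoints; I would do this once for each endpoint — it is a routine substitution into the two quadratic branches — and then the inclusions follow immediately from monotonicity, with condition~(\ref{2-2}) ensuring all these endpoints stay inside $A$ and on the correct side of $\tfrac12$ (which is where the bound $b\le\frac{4a}{4-a^2}$, equivalently the "two crossings" property from Figure~\ref{fig:cross}, gets used).
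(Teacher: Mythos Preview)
Your high-level strategy --- compute $f$ at the endpoints of each $A_i$ and invoke monotonicity of the two quadratic branches --- is exactly what the paper's one-line proof (``derived from finding the images of $f_{a,b}$'') amounts to. However, you have misidentified the sets, and several of your intermediate claims are wrong as a result. The $A_i$ are \emph{not} a partition of $A$ with breaks at the crossing abscissas $c_1,c_2$ of $\{f=\tfrac12\}$; in particular the left endpoint of $A_2$ is \emph{not} $c_1$. A direct substitution into the two branches shows
\[
\text{left endpt of }A_2 = f\!\left(\tfrac12-\tfrac b4\right),\qquad
\text{right endpt of }A_3 = f\!\left(\tfrac12+\tfrac a4\right),
\]
and then, applying $f$ once more,
\[
\text{right endpt of }A_1 = f^2\!\left(\tfrac12+\tfrac a4\right),\qquad
\text{left endpt of }A_4 = f^2\!\left(\tfrac12-\tfrac b4\right).
\]
Thus $A_1=f(A_3)$ and $A_4=f(A_2)$ \emph{exactly}, by monotonicity together with the one-sided values $f(\tfrac12{+}0)=\tfrac12-\tfrac b4$ and $f(\tfrac12)=\tfrac12+\tfrac a4$. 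The four intervals are small, pairwise disjoint, and leave gaps in $A$ (compare the paper's tikz picture); they are the first four terms of the nested sequence $A\supset f(A)\supset f^2(A)\supset\cdots$ split at $\tfrac12$. Condition~\eqref{2-2}, namely $b\le \frac{4a}{4-a^2}$, is exactly the inequality $f(\tfrac12+\tfrac a4)\ge \tfrac12$, which keeps $A_3\subset(\tfrac12,1]$ so that the right branch is used on all of $A_3$; your ``two crossings'' observation is a consequence of this, not the defining mechanism.

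Once the endpoints are correctly identified as iterated images of $\tfrac12\pm$, the remaining two inclusions $f(A_1),f(A_4)\subset A_2\cup A_3$ follow by one further application of the appropriate branch and checking that $f^3$ of the extreme points of $A$ lands between $f(\tfrac12-\tfrac b4)$ and $f(\tfrac12+\tfrac a4)$. So: right method, but your ``$c_1,c_2$'' framework should be discarded --- it leads to the contradictions you noticed (``wait'', ``hmm'') and to the false assertion that $f$ on the left half of $A$ stays in $[0,\tfrac12]$.
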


\begin{center}
\tikz[xscale=15]{
\draw [help lines,xstep=0.5cm] (0,0) grid (1,0);
\draw [line width=0.5mm, blue] (0,0) plot [domain=0.375:0.383] (\x,0);
\draw [line width=0.5mm, red] (0,0) plot [domain=0.492:0.5] (\x,0);
\draw [line width=0.5mm, black] (0,0) plot [domain=0.5:0.508] (\x,0);
\draw [line width=0.5mm, orange] (0,0) plot [domain=0.617:0.625] (\x,0);
\draw [->] (0,0) -- (1,0) ;
\node at (0.375,0.3) {$A_1$};
\node at (0.47,0.3) {$A_2$};
\node at (0.53,0.3) {$A_3$};
\node at (0.617,0.3) {$A_4$};
\foreach \y in {0,0.1,0.2,0.3,0.4,0.5,0.6,0.7,0.8,0.9,1}{\node [below] at (\y,0) {\tiny \y};}
}
\end{center}

The proof of Lemma is derived from finding the images of $f_{a,b}$, above.

\begin{proof} Suppose the point $x$  belongs to the set $A_1$. According to Lemma \ref{lem2} $f(x)\in A_2$ or $f(x)\in A_3$.

a) If $f(x)\in A_2$ then $f^2(x)\in A_4$. $f^3(x)\in A_2$ or $f^3(x)\in A_3$.

a1) If $f^3(x)\in A_2$ then $f^4(x)\in A_2$ and it is repeated.

a2) If $f^3(x)\in A_3$ then $f^4(x)\in A_1$.

Consequently if $x\in A_1$  is a periodic point then this point
must be either 2-periodic or 4-periodic point.
That's why, there is no odd periodic point in $A_1$.
We can show the same for the remaining 3 sets.

 Now let's assume that $x$  doesn't belong to these four sets.
 Then the point must be either in one of sets $K_1=(\frac{1}{2}-\frac{b}{4},\frac{1}{2}]\setminus (A_1\cup A_2 )$ or $K_2=(\frac{1}{2},\frac{1}{2}+\frac{a}{4}]\setminus (A_3\cup A_4 )$. The following two cases may hold. First,
 the image of $f$ at $x$ belongs to four sets in above after several times. Then the proof is clear.
 Second, if it remains in these two sets even after finite or countable steps, then there is no odd periodic points. Because $x\in K_1$, $f(x)\in K_2$, $f^2(x)\in K_1$ and so on. If $x\in K_2$ then $f(x)\in K_1$, $f^2(x)\in K_2$, $f^3(x)\in K_1$, $\ldots$ .

\end{proof}
The following example shows that if the condition of Proposition \ref{prop1} is not satisfied then
there may exist an odd periodic point.

{\bf Example.} We have 3-period points when the function is
\begin{equation}\label{pfunc1}
f_{\frac{1}{2},1}(x)=\left\{
     \begin{array}{ll}
       x(\frac{3}{2}-\frac{x}{2}), & \hbox{$0\leq x\leq \frac{1}{2}$;} \\
       x^2, & \hbox{$\frac{1}{2}<x\leq1$.}
     \end{array}
   \right.
\end{equation}
Computer analysis shows that the points $0,47556611\ldots$, $0,60026760\ldots$, $0,36032119\ldots$ are 3-periodic points of (\ref{pfunc1}).

\section{Lyapunov exponents and bifurcation}

Recall\footnote{https://en.wikipedia.org/wiki/Lyapunov$_-$exponent} that for discrete time dynamical
 system $ x_{{n+1}}=f(x_{n})$, for an orbit starting with $x_{0}$
  the  maximal \textbf{Lyapunov exponent}  can be defined as follows:
\begin{equation}\label{exp}
\lambda (x_{0})=\lim _{n\to \infty }{\frac {1}{n}}\sum _{i=0}^{n-1}\ln |f'(x_{i})|.
\end{equation}

The Lyapunov exponent "$\lambda$", is useful for distinguishing among the various types of
orbits. It works for discrete as well as continuous systems.

\begin{prop}\label{prop3}
Let $x_0 \in [0,1]\setminus\{\frac{1}{2}\}$. Then the Lyapunov exponent is non negative for (\ref{mfunc}).
\end{prop}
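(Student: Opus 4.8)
The plan is to exploit the key fact established in Theorem~\ref{thm1}: at every point $x \in [0,1]\setminus\{\tfrac12\}$ where the derivative exists, one has $f'(x) > 1$ — indeed $f'(x) \in (1, 1+a]$ on $[0,\tfrac12)$ and $f'(x) \in (1, 1+b]$ on $(\tfrac12, 1]$. First I would observe that the orbit $\{x_i\}_{i\ge 0}$ of $x_0$ can meet the bad point $\tfrac12$ at most finitely often; in fact, since $f$ maps $[0,1]$ into $[0,1]$ and the only preimage issues arise from the single discontinuity, a generic orbit avoids $\tfrac12$ entirely, and even if $x_i = \tfrac12$ for some $i$ this happens for at most one index $i$ (after which the orbit continues as $f(\tfrac12) = \tfrac12 \cdot (1+a-\tfrac a2)$, etc.), so only finitely many terms of the sum in \eqref{exp} are affected. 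One should note here that $f(\tfrac12^-) = \tfrac12$ is a removable-type value and $f'(\tfrac12 - 0) = 1 > 0$, so even a one-sided derivative at $\tfrac12$ is $\ge 1$ in absolute value; thus no term of the sum is problematic and in any case a single undefined term does not affect the $\frac1n$-normalized limit.

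Next I would write, for each $i$ with $x_i \ne \tfrac12$,
$$\ln|f'(x_i)| = \ln f'(x_i) > \ln 1 = 0,$$
using $f'(x_i) > 1 > 0$. Summing over $i = 0, \dots, n-1$ and dividing by $n$ gives
$$\frac{1}{n}\sum_{i=0}^{n-1}\ln|f'(x_i)| \ge 0$$
for every $n$ (up to the finitely many, ultimately negligible, indices where $x_i = \tfrac12$). Passing to the limit $n \to \infty$ — which exists by the definition of $\lambda(x_0)$ being invoked, or can be replaced by $\liminf$ if one wants to be careful — yields $\lambda(x_0) \ge 0$.

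The only genuine obstacle is the bookkeeping around the discontinuity point $\tfrac12$: one must argue that the orbit of $x_0$ passes through $\tfrac12$ at most finitely many times (otherwise the Lyapunov sum has infinitely many undefined terms and the statement is vacuous for that $x_0$). I would dispatch this by noting that $\tfrac12$ has only finitely many preimages under any finite iterate $f^k$ — on each of the two smooth branches $f$ is a quadratic, hence at most two solutions of $f(x) = c$ per branch — so $\bigcup_{k\ge 0} f^{-k}(\tfrac12)$ is countable and, more to the point, for a fixed $x_0$ either $x_0$ lies in this set and hits $\tfrac12$ exactly once (after which the forward orbit is a well-defined sequence in $(\tfrac12 - \tfrac b4, \tfrac12]$ by the invariance of $A$ from Proposition~\ref{prop2}, staying away from $\tfrac12$ since $f(\tfrac12) > \tfrac12$ when $a>0$... and one checks $f(\tfrac12)\neq\tfrac12$), or $x_0 \notin \bigcup_k f^{-k}(\tfrac12)$ and the orbit avoids $\tfrac12$ altogether. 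Either way all but at most one term of the sum satisfies $\ln|f'(x_i)| > 0$, and the conclusion $\lambda(x_0) \ge 0$ follows. Everything else is the one-line estimate $f' > 1 \Rightarrow \ln|f'| > 0$.
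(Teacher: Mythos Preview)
Your argument is essentially the paper's: invoke the derivative bound from Theorem~\ref{thm1}, namely $f'(x)>1$ for all $x\in[0,1]\setminus\{\tfrac12\}$, conclude $\ln|f'(x_i)|>0$ for each term, and hence the averaged sum in \eqref{exp} has a nonnegative limit. The paper's own proof is literally that two-line observation and stops there.

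Your additional discussion of what happens when the orbit meets $\tfrac12$ goes beyond what the paper does, and contains small slips: the invariant set from Proposition~\ref{prop2} is $A=(\tfrac12-\tfrac b4,\tfrac12+\tfrac a4]$, not $(\tfrac12-\tfrac b4,\tfrac12]$; and ``$f(\tfrac12)>\tfrac12$'' only rules out $x_{i+1}=\tfrac12$, not all later returns. These points do not affect the core argument, which matches the paper's.
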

\begin{proof}
According to Theorem \ref{thm1} we have $f'(x)>1$ for all $x\in [0,1]\setminus\{\frac{1}{2}\}$. So, one gets $\ln |f'(x_{i})|>0$ which yields that
the limit (\ref{exp}) is non negative.
\end{proof}

\begin{figure}[h]
  %\centering|
%  \begin{subfigure}[b]{\linewidth}
    \includegraphics[width=7cm]{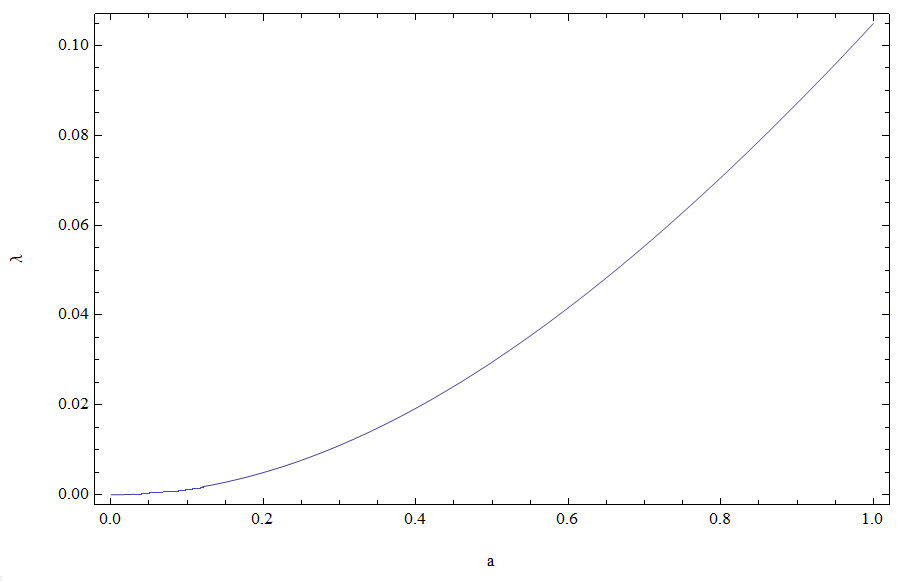}
    % \caption{$b=a$}
% \end{figure}
%\begin{figure}[h]
    \includegraphics[width=7cm]{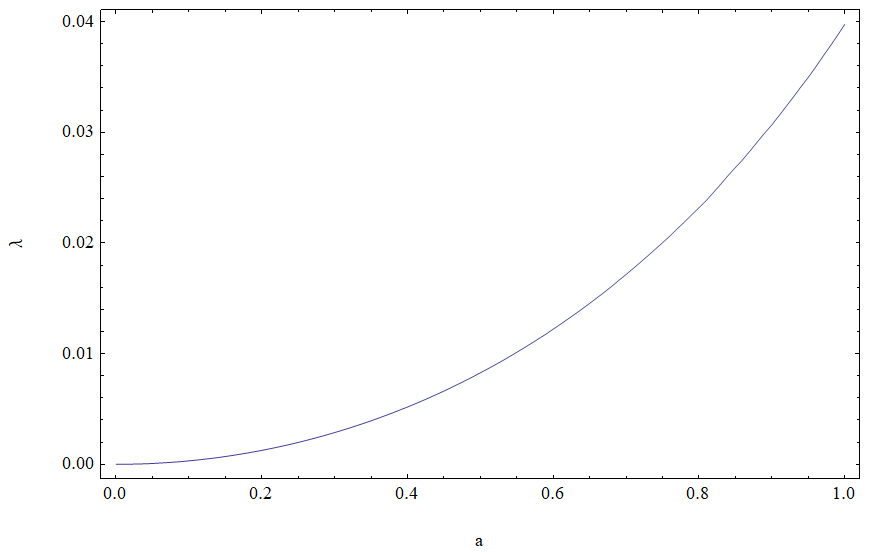}
   % \caption{$b=\frac{a}{4-a^2}$}
  %\end{subfigure}
    \caption{Typical graphics of the Lyapunov exponent for the system function $f_{a,b}(x)$
    for different values of the parameter $b$ related to $a$: left $b=a$ and right $b=\frac{a}{4-a^2}$.}
  \label{fig:Lyapunovexp}
\end{figure}

It is known that if $\lambda>0$ then the orbit is unstable and chaotic.
Nearby points, no matter how close, will diverge to any arbitrary
separation. All neighborhoods in the phase space will eventually be visited. These points are said to be
unstable (see Figure \ref{fig:Lyapunovexp}).

Most commonly applied to the mathematical study of dynamical systems, a \textbf{bifurcation}\footnote{https://en.wikipedia.org/wiki/Bifurcation$_-$theory} occurs when a small smooth change made to the parameter values (the bifurcation parameters) of a system causes a sudden 'qualitative' or topological change in its behavior. Bifurcations occur in both continuous systems (described by ODEs, DDEs or PDEs) and discrete systems (described by maps).

From Figure \ref{bifurcation1} to Figure \ref{bifurcation5} we show the bifurcation diagrams for the system associated with (\ref{mfunc}). A \textbf{bifurcation diagram}\footnote{https://en.wikipedia.org/wiki/Bifurcation$_-$diagram} shows the values visited or approached asymptotically (fixed points, periodic orbits, or chaotic attractors) of a system as a function of a bifurcation parameter in the system. It is usual to represent stable values with a solid line and unstable values with a dotted line, although often the unstable points are omitted.

In Figure \ref{bifurcation1} the limit set consists of three intervals as we saw in Proposition \ref{prop1} for each value of the parameter $a$ in $[0,1]$ (The sets $A_2$ and $A_3$ are symmetric about $\frac{1}{2}$). Because if $b=a$ then the condition (\ref{2-2}) holds. That's, the trajectory of any initial point in $[0,1]$ (except periodic points) is dense in these intervals.
In Figure \ref{bifurcation1b}  the limit set consists of four intervals for each value of $a$.

The boundary line changes according to how $b$ depends on $a$. For example, from Figure \ref{bifurcation1} to \ref{bifurcation4} $b$ is linearly depended to $a$, so the boundary line is linear. But, in Figure \ref{bifurcation5} it is a curve, because $b$ isn't linearly depended to $a$.\\

\section{Biological interpretations}

 Let $x = (x_1, x_2)\in S^1$ be an
initial state, i.e. the probability distribution on the set $E=\{1, 2\}.$

The following are interpretations of our results:

\begin{itemize}
\item If $a=0$ then by (\ref{vabp}) we have $P_{12,1}=1/2$ and Proposition \ref{pj1} means that
if initially the population has the specie 1 with a probability $x\leq 1/2$ then the state of the population
does not change, i.e. it remain the same as $(x,1-x)$. In case when initially the specie 1 has probability $x>1/2$
then after finitely many generations the population comes to an equilibrium state $(p, 1-p)$ with $p\leq 1/2$. Part 3)
of Proposition \ref{pj1} means that for each equilibrium state the past of such state of population can be uniquely reproduced.
\item \emph{Both species survive}: Proposition \ref{prop2} and Lemma \ref{lj1} means the specie 1 (resp. 2) of the population asymptotically survives with probability
$x\in A=(\frac{1}{2}-\frac{b}{4},\frac{1}{2}+\frac{a}{4}]$, i.e.,
${1\over 4}<{1\over 2}-{b\over 4}<x< \frac{1}{2}+\frac{a}{4}<{3\over 4}$ (resp.  for $y=1-x$ we have
${1\over 4}<{1\over 2}-{a\over 4}<y< \frac{1}{2}+\frac{b}{4}<{3\over 4}$). Thus both survive with probability $>1/4$.
\item Theorem \ref{tj1} means that the population may have an alternating (periodic) state. But Theorem \ref{thm1} says
that there is no attracting periodic state of the population. Consequently, under condition $ab>0$, if initially, the
population is not on at equilibrium or periodic state then it
does not tend to such a state with the passage
of time (non-stable fixed and periodic points).

\item Proposition \ref{prop1} means in general that the population does not come back to the initial state after
spending an odd time (odd generation).

\item Proposition \ref{prop3}: the nearby states of the population, no matter how close, will diverge to a
separation.

\end{itemize}

\begin{figure}[h]
  %\centering
%  \begin{subfigure}[b]{\linewidth}
    \includegraphics[width=8cm]{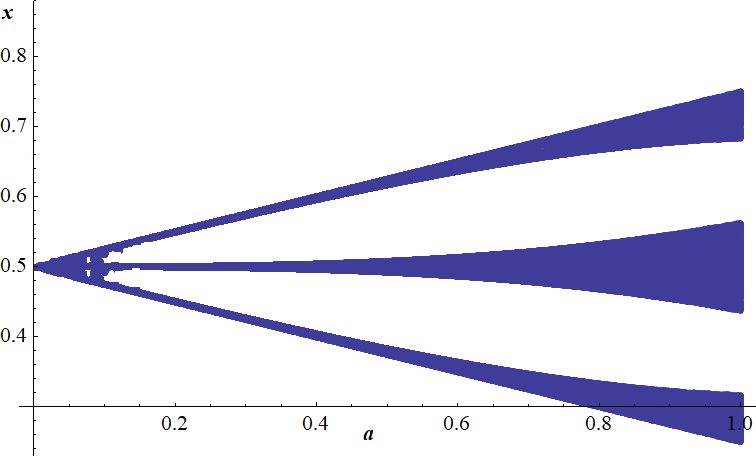}
     \caption{Bifurcation diagram for the system function $f_{a,b}(x)$ for the case $b=a$.}
      \label{bifurcation1}
\end{figure}
\begin{figure}[h]
    \includegraphics[width=8cm]{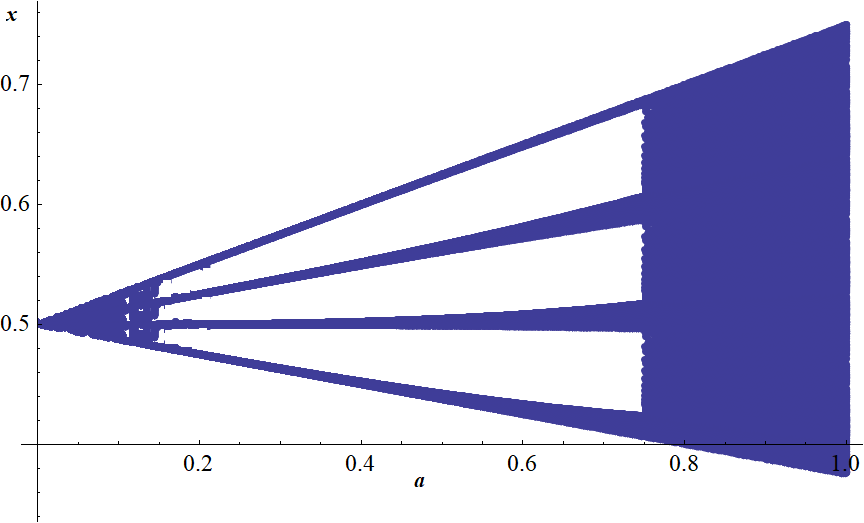}
    \caption{Bifurcation diagram for the system function $f_{a,b}(x)$ for the case $b=\frac{a}{2}$.}
    \label{bifurcation1b}
 \end{figure}

\begin{figure}[h]
%\begin{subfigure}[b]{\linewidth}
    \includegraphics[width=8cm]{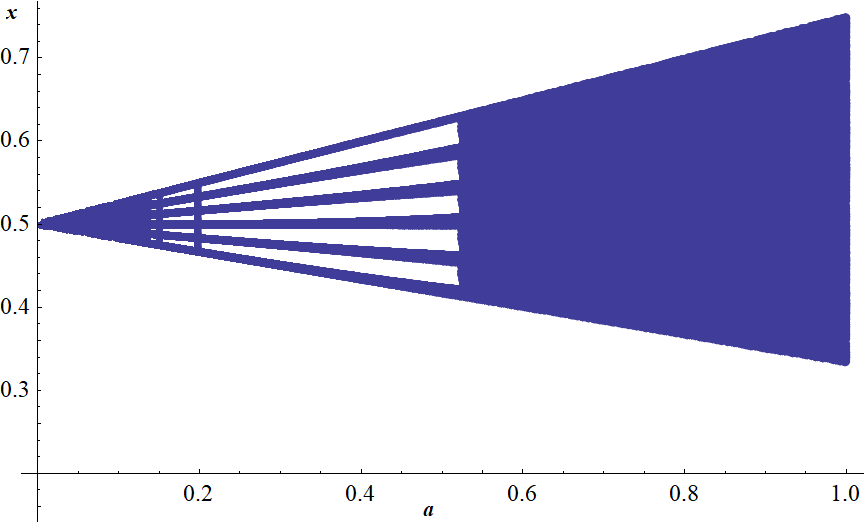}
    \caption{Case  $b=\frac{2a}{3}$}
\end{figure}
\begin{figure}[h]
    \includegraphics[width=8cm]{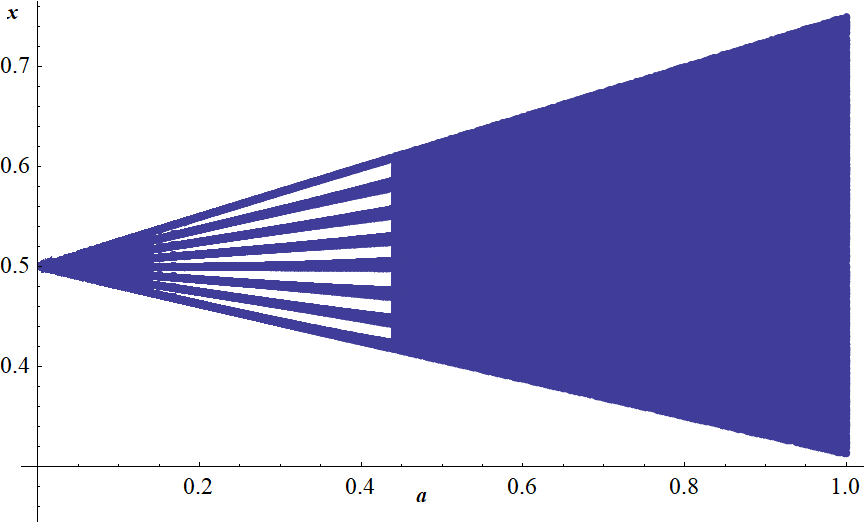}
       \caption{Case $b=\frac{3a}{4}$.}
  \label{bifurcation3}
\end{figure}

\begin{figure}[h]
%\begin{subfigure}[b]{\linewidth}
    \includegraphics[width=8cm]{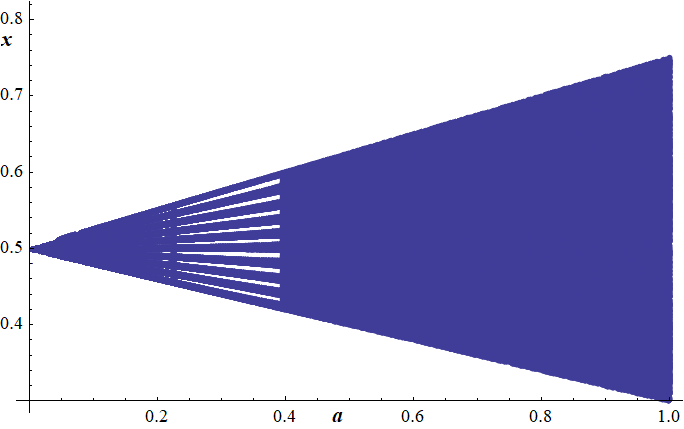}
    \caption{Case $b=\frac{4a}{5}$.}
\end{figure}
\begin{figure}[h]
 \includegraphics[width=8cm]{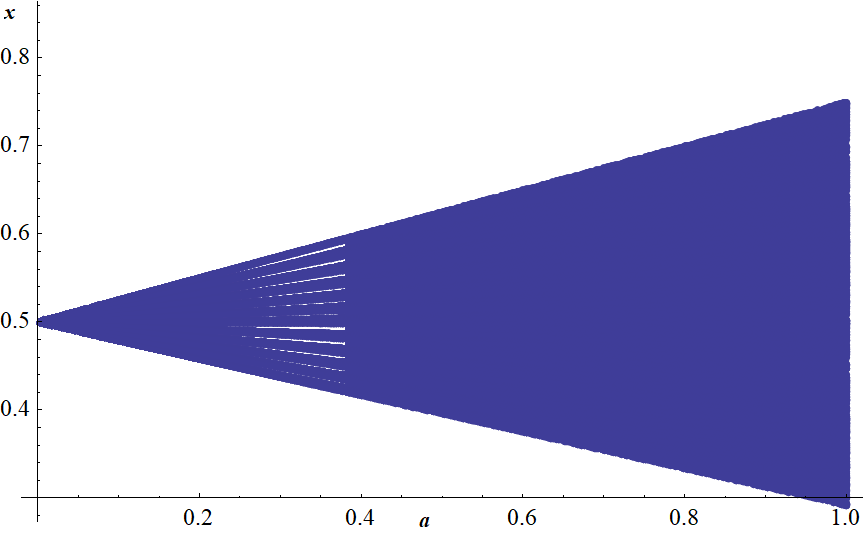}
 \caption{Case $b=\frac{5a}{6}$.}
  \label{bifurcation4}
\end{figure}

\begin{figure}[h]
  %\begin{subfigure}[b]{\linewidth}
    \includegraphics[width=8cm]{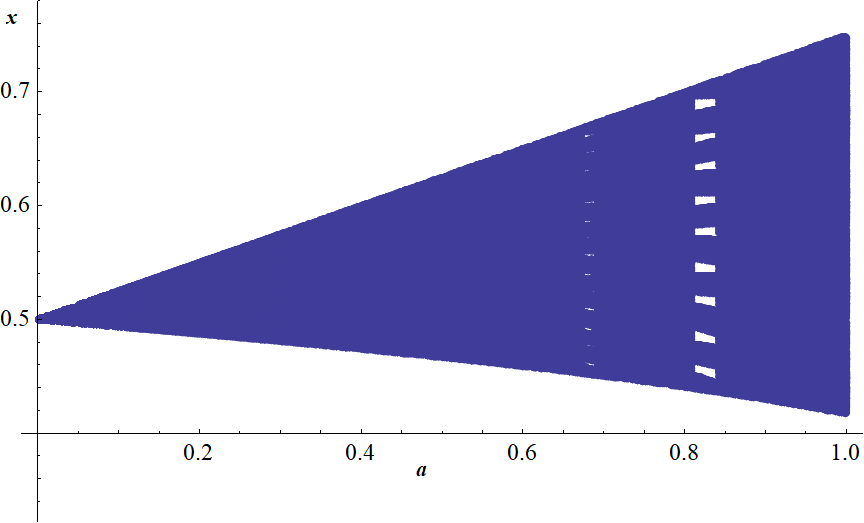}
    \caption{Case $b=\frac{a}{4-a^2}$}
 \end{figure}
\begin{figure}[h]
    \includegraphics[width=8cm]{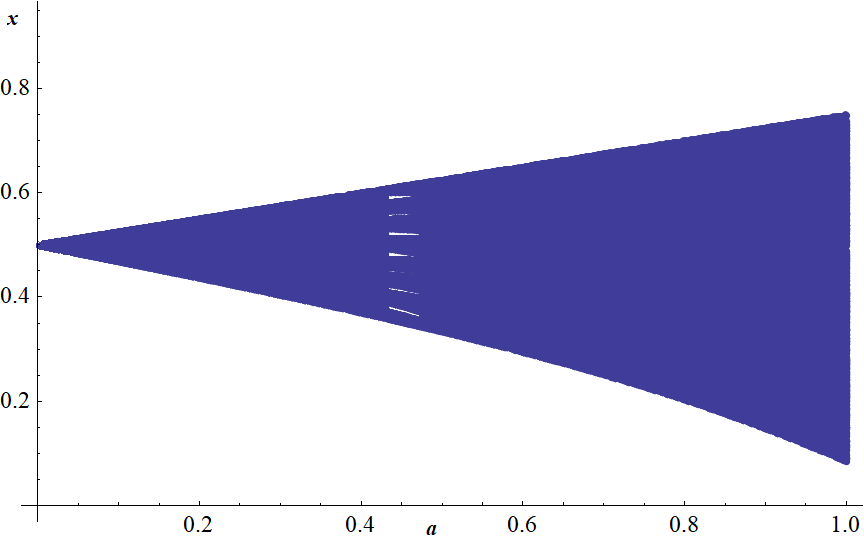}
  \caption{Case $b=\frac{5a}{4-a^2}$.}
  \label{bifurcation5}
\end{figure}

\end{document}